\def\>{\relax\ifmmode\mskip.666667\thinmuskip\relax\else\kern.111111em\fi}
\def\<{\relax\ifmmode\mskip-.333333\thinmuskip\relax\else\kern-.0555556em\fi}
\def\vsk#1>{\vskip#1\baselineskip}
\def\vv#1>{\vadjust{\vsk#1>}\ignorespaces}
\def\vvn#1>{\vadjust{\nobreak\vsk#1>\nobreak}\ignorespaces}
\def\vvgood{\vadjust{\penalty-500}} \let\alb\allowbreak
\def\fratop{\genfrac{}{}{0pt}1}
\def\satop#1#2{\fratop{\scriptstyle#1}{\scriptstyle#2}}
  \let\ssize\scriptstyle
\let\sssize\scriptscriptstyle
\let\Medskip\medskip
\def\medskip{\par\Medskip}
\let\Bigskip\bigskip
\def\bigskip{\par\Bigskip}
\let\Maketitle\maketitle
\def\maketitle{\Maketitle\thispagestyle{empty}\let\maketitle\empty}
\newtheorem{thm}{Theorem}[section]
\newtheorem{cor}[thm]{Corollary}
\newtheorem{lem}[thm]{Lemma}
\newtheorem{defn}[thm]{Definition}
\numberwithin{equation}{section}
\theoremstyle{definition}
\let\mc\mathcal
\let\nc\newcommand
\let\dl\delta
\let\Dl\Delta
\let\epe\epsilon
\let\la\lambda
\let\phi\varphi
\let\Ups\Upsilon
\let\om\omega
\let\der\partial
\let\bra\langle
\let\ket\rangle
\let\ge\geqslant
\let\geq\geqslant
\let\le\leqslant
\let\leq\leqslant
\let\on\operatorname
\let\bi\bibitem
\let\bs\boldsymbol
\def\C{{\mathbb C}}
\def\Z{{\mathbb Z}}
\def\R{{\mathbb R}}
\def\F{{\mc F}}
\def\Ic{{\mc I}}
\def\Oc{{\mc O}}
\def\+#1{^{\{#1\}}}
\def\lsym#1{#1\alb\dots\relax#1\alb}
\def\lc{\lsym,}
\def\Re{\on{Re}}
\def\Res{\on{Res}}
\def\cirs{{\raise.2ex\hbox{$\sssize\circ$}}}
\def\beq{\begin{equation}}
\def\eeq{\end{equation}}
\def\be{\begin{equation*}}
\def\ee{\end{equation*}}
\nc{\bea}{\begin{eqnarray*}}
\nc{\eea}{\end{eqnarray*}}
\nc{\bean}{\begin{eqnarray}}
\nc{\eean}{\end{eqnarray}}
\nc{\Ref}[1]{{\rm(\ref{#1})}}
\let\ga\gamma
\let\Ga\Gamma
\nc{\bla}{{\bs\la}}
\nc{\Il}{{\Ic_{\bla}}}
\nc{\Fla}{\F_\bla}
\nc{\tfl}{{T^*\<\Fla}}
\nc{\GL}{{GL_n(\C)}}
\nc{\GLC}{{GL_n(\C)\times\C^*}}
\def\pii{\pi\sqrt{\<-1}}
\let\Dx D
\def\zzz{z_1\lc z_n}
\let\sd s %% \def\sd{\dot s}
\def\zb{\bs z}
\def\ddk_#1{q_{#1}\<\>\frac\der{\der\<\>q_{#1}}}
\def\bul{\mathbin{\raise.2ex\hbox{$\sssize\bullet$}}}
\def\intt{\mathchoice
{\mathop{\raise.2ex\rlap{$\,\,\ssize\backslash$}{\intop}}\nolimits}
{\mathop{\raise.3ex\rlap{$\,\sssize\backslash$}{\intop}}\nolimits}
{\mathop{\raise.1ex\rlap{$\sssize\>\backslash$}{\intop}}\nolimits}
{\mathop{\rlap{$\sssize\<\>\backslash$}{\intop}}\nolimits}}
\def\GZ/{Gelfand-Zetlin}
\def\KZ/{{\slshape KZ\/}}
\def\qKZ/{{\slshape qKZ\/}}
\def\qKZB/{{\slshape qKZB\/}}
\def\XXX/{{\slshape XXX\/}}
\def\XXZ/{{\slshape XXZ\/}}
\def\zz{{\bs z}}
\def\qq{{\bs q}}
\def\yy{{\bs y}}
\def\ZZ{{\bs Z}}
\def\yy{{\bs y}}
\def\pti{\textit{pt}}
\def\Tdd{\acute T}
\def\Zdd{\acute Z}
\def\8{{\infty}}
\def\Der{\mathrm{Der}}
\def\Coh{\mathrm{Coh}}
\begin{document}

\hrule width0pt
\vsk->

\title[Quantum differential equation and K-theory for a projective space]
{Equivariant quantum differential equation, Stokes bases, and K-theory for a projective space}

\author[Vitaly Tarasov and Alexander Varchenko]
{Vitaly Tarasov$\>^\circ$ and Alexander Varchenko$\>^\star$}

\maketitle

\begin{center}
{\it $^{\star}\<$Department of Mathematics, University
of North Carolina at Chapel Hill\\ Chapel Hill, NC 27599-3250, USA\/}

\vsk.5>
{\it $^{\star}\<$Faculty of Mathematics and Mechanics, Lomonosov Moscow State
University\\ Leninskiye Gory 1, 119991 Moscow GSP-1, Russia\/}

\vsk.5>
{\it $\kern-.4em^\circ\<$Department of Mathematical Sciences,
Indiana University\,--\>Purdue University Indianapolis\kern-.4em\\
402 North Blackford St, Indianapolis, IN 46202-3216, USA\/}

\vsk.5>
{\it $^\circ\<$St.\,Petersburg Branch of Steklov Mathematical Institute\\
Fontanka 27, St.\,Petersburg, 191023, Russia\/}
\end{center}

{\let\thefootnote\relax
\footnotetext{\vsk-.8>\noindent
$^\circ\<${\sl E\>-mail}:\enspace vt@math.iupui.edu\>, vt@pdmi.ras.ru\>,
supported in part by Simons Foundation grant 430235
\\
$^\star\<${\sl E\>-mail}:\enspace anv@email.unc.edu\>,
supported in part by NSF grant DMS-1665239}}

\begin{abstract}
We consider the equivariant quantum differential equation for the projective space $P^{n-1}$.
We prove an equivariant gamma theorem for $P^{n-1}$,
which describes the asymptotics of the differential equation at its regular singular point in terms of the
equivariant characteristic gamma class of the tangent bundle of $P^{n-1}$. We describe the Stokes bases
of the differential equation at its irregular singular point in terms of the exceptional bases
of the equivariant K-theory algebra of $P^{n-1}$ and a suitable braid group action on the
set of exceptional bases.

Our results are an equivariant version of the well-know results of B.\,Dubrovin
and D.\,Gu\-zzetti.
\end{abstract}

\vsk>
{\leftskip3pc \rightskip\leftskip \Small
\setbox0\hbox{{\it Key words\/}: {}}\parindent\wd 0
\hangindent\parindent\noindent\box0
Equivariant quantum differential equation,
equivariant $K$-theory, $q$-hyper\-geo\-met\-ric solutions, braid group action
\vsk.5>
\noindent
{\it 2010 Mathematics Subject Classification\/}: 14N35, 34M40, 17B80
\par}

\vsk>

\begin{center}
To Boris Dubrovin with admiration
\end{center}

\bigskip

{\small\tableofcontents\par}

\setcounter{footnote}{0}
\renewcommand{\thefootnote}{\arabic{footnote}}

\section{Introduction}

The quantum differential equation of the complex projective space $P^{n-1}$
is an ordinary differential equation
\beq
\label{Qde}
\Bigl(p\frac{d}{dp} - x *_{p}\Bigr) I(p) =0,
\eeq
where the unknown function $I(p)$ takes values in
the cohomology algebra $H^*(P^{n-1};\C)$ and
$x *_p: H^*(P^{n-1};\C) \to H^*(P^{n-1};\C)$ is the
operator of {\it quantum} multiplication by the first Chern class of the tautological line bundle over $P^{n-1}$.
The differential equation has two singular points: a regular singular point at $p=0$ and an irregular singular point
at $p=\infty$.
The quantum differential equation has the following remarkable structures.

\vsk.2>
The specially normalized asymptotics of its solutions
at $p=0$ can be described in terms of the characteristic
gamma class of the tangent bundle of $P^{n-1}$. This description
of the asymptotics by B.\,Dubrovin in \cite{D1} was the first example of a gamma theorem,
which is proved now in many examples and is
known as the gamma conjecture, see \cite{D1, D2, KKP, GGI, GI, GZ, CDG}.

The Stokes matrices of the quantum differential equation at the irregular singular point $p=\infty$ are described in
terms of the braid group action on the set of full collections of exceptional objects in the
derived category $\,\Der^b(\Coh(P^{n-1}))$ of coherent sheaves on $P^{n-1}$.
That phenomenon was predicted by B.\,Dubrovin in \cite{D1} for Fano varieties and was proved
for $P^{n-1}$ by D.\,Guzzetti \cite{Gu}. That braid group action is described with the help of a
certain non-symmetric bilinear from on the K-theory algebra $K(P^{n-1},\C)$.

\vsk.2>

In this paper we consider the {\it equivariant} quantum differential equation of the projective space $P^{n-1}$ and
establish similar results.

\vsk.2>
In the equivariant case the torus $T=(\C^\times)^n$
acts on $P^{n-1}$ and the quantum differential equation takes the form
\bean
\label{QDe}
\Bigl(p\frac{d}{dp} - x *_{p,\zz}\Bigr) I(p, z_1\lc z_n) =0,
\eean
where $\zz=(\zzz)$ are equivariant parameters.
We also have
a system of the \qKZ/ difference equations
\beq
\label{QKZ}
I(p,z_1, \dots, z_i-1, \dots, z_n ) = K_i(p,\zzz) I(p,\zzz),
\qquad i=1\lc n\,,\kern-.6em
\eeq
where $ K_i(p,\zzz)$ are suitable linear operators.
The joint system of the equivariant quantum differential equation and \qKZ/
difference equations is compatible. The space of solutions of this system
is a module over the ring of scalar functions in $\zzz$, \,1-periodic with
respect to each of the variables $\zzz$.

\vsk.2>
We prove an equivariant gamma theorem, which describes the asymptotics of
solutions at $p=0$ of the equivariant quantum differential equation in terms of the
equivariant characteristic gamma-class of the tangent bundle of $P^{n-1}$,
see Theorem \ref{thm gth}.

\vsk.2>

We describe the Stokes bases
of the equivariant quantum differential equation at $p=\infty$.
For that we identify the space of solutions of the joint system of equations \Ref{QDe} and \Ref{QKZ} with
the space of the equivariant K-theory algebra $K_T(P^{n-1},\C)$. We introduce a sesquilinear form
on $K_T(P^{n-1},\C)$, exceptional bases of $K_T(P^{n-1},\C)$, a braid group action on the exceptional bases,
and describe the Stokes bases in terms of that braid group action, see Theorem \ref{thm main}.

\vsk.2>
To prove these results we use integral representations for solutions of the joint system
of equations \Ref{QDe} and \Ref{QKZ} obtained in \cite{TV6}.
In \cite{TV6} we constructed $q$-hypergeometric integral representations for
solutions of the joint systems of equivariant quantum differential equations and associated \qKZ/ difference
equations for the cotangent bundle $\tfl$ of a partial flag variety $\mc F_\bla$. In
a suitable limit those solutions become solutions of the corresponding equations for the
partial flag variety $\mc F_\bla$. In this paper we use the special case of $\mc F_\bla=P^{n-1}$.

\vsk.2>

The important role in this paper is played by the identification of the space of solutions of the joint system
of equations \Ref{QDe} and \Ref{QKZ} with the space of the K-theory algebra $K_T(P^{n-1},\C)$. This identification
also comes from \cite{TV6}. Earlier examples of such an identification see in \cite{TV3,TV7}.

\vsk.2>
We would like to stress that the equivariant case is simpler than the corresponding non-equivariant case.
The equivariant case is more rigid because, in addition to the quantum differential equation,
we also have the compatible system of difference equations, and therefore there are less problems with choices of
normalizations of solutions.

\vsk.2>

The paper is organized as follows. In Section \ref{sec prs} we introduce the equivariant
cohomology and K-theory algebra of $P^{n-1}$.
In Section \ref{sQde} we introduce the equivariant quantum differential equation and \qKZ/ difference equations.
In Section \ref{sec sol} we describe the integral representations for solutions and asymptotics of solutions
at $p=0$.
In Section \ref{sec Ainf} we discuss asymptotics of solutions at $p=\infty$, introduce Stokes bases in the space of solutions.
In Section \ref{sec ebbg} we introduce exceptional bases in the space of solutions and a braid group action on the set of exceptional bases. In Section \ref{sec StB} we describe the Stokes bases of the equivariant quantum differential equation
at $p=\infty$. Our proofs in Section \ref{sec StB} are similar to the corresponding proofs in \cite{Gu}.

\medskip
The authors thank Giodano Cotti, Alexander Givental, and Richard Rim\'anyi
for many helpful discussions.

\section {Projective space}
\label{sec prs}

\subsection{Equivariant cohomology}
\label{sec:equiv}

For \,$n\ge 2$\,, let $P^{n-1}$ be the projective space parametrizing
one-dimen\-si\-onal subspaces \,$F\subset\C^n$.

Let $\{u_1\lc u_n\}$ be the standard basis of $\C^n$. For \,$I\in\{1\lc n\}$\,,
let \,$\pti_I\in P^{n-1}$ \,be the point corresponding to the coordinate line
spanned by $u_I$.
The complex torus $T^n$ acts diagonally on $\C^n$, and hence on $P^{n-1}$.
The points $\pti_I$, $I=1\lc n$, compose the fixed point set.

We consider the equivariant cohomology algebra \,$H_{T^n}(P^{n-1};\C)$\,.
Denote by $x$ the equivariant Chern root of the tautological line bundle
$\mc L$ over \,$P^{n-1}$ \,with fiber \,$F$\,.
Denote by $\yy =(y_1\lc y_{n-1})$ the equivariant Chern
roots of the vector bundle over \,$P^{n-1}$ \,with fiber \,$\C^n/F$\,.
Denote by \,$\zb=(\zzz)$
\,the Chern roots corresponding to the factors of the torus \,$T^n$\,. Then
\begin{align}
\label{Hrel}
H_{T^n}(P^{n-1};\C)\,&{}=\,
\C[x,\zz]\>\Big/\Bigl\bra\,\prod_{a=1}^n\,(x-z_a)\Bigr\ket\,
\\
\notag
&{}=\,\C[x, \yy,\zz]^{S_{n-1}}\>\Big/
\Bigl\bra(u-x)\prod_{j=1}^{n-1}(u-y_j)-\prod_{a=1}^n\,(u-z_a)\Bigr\ket\,,
\end{align}
where $\C[x, \yy,\zz]^{S_{n-1}}$ is the algebra of polynomials in $x,\yy,\zz$ symmetric in the variables
$y_1\lc y_{n-1}$, \,and the isomorphism of the first quotient to the second one
sends an element $f(x,\zz)$ of the first quotient to the element $f(x,\zz)$ of
the second.

\vsk.2>
The cohomology algebra \,$H_{T^n}(P^{n-1};\C)$ \,is a module over
\,$H^*_{T^n}({pt};\C)=\C[\zz]$\,.

\subsection{Symmetric functions}
Consider the algebra $\C[\bs Z^{\pm 1}]=\C[Z_1^{\pm1}\lc Z^{\pm1}_n]$ of
Laurent polynomials and its elements
\bean
\label{sm}
s_k(\ZZ) &=&\sum_{1\leq i_1<\dots<i_k\leq n}Z_{i_1}\dots Z_{i_k},
\qquad k=1\lc n,
\\
\notag
m_k(\ZZ) &=& \sum_{\satop{i_1\ge0\lc\<\>i_n\ge0\!}{i_1\lsym+\>i_n=\>k}}
Z_1^{i_1}\dots Z_n^{i_n}, \qquad k\in\Z_{>0}\,.
\eean
Put $s_0=1$, $m_0=1$. Then
\bean
\label{ms}
\sum_{i=0}^k\,(-1)^i\,m_i(\ZZ)\,s_{k-i}(\ZZ)\,=\,0\,, \qquad k\in\Z_{>0}\,.
\eean

For $f(Z_1\lc Z_n)\in \C[\bs Z^{\pm1}]$ denote $f(\bs Z^{-1})=f(Z_1^{-1}\lc Z_n^{-1})$.

\subsection{Equivariant K-theory}
\label{sec:equK}

Consider the equivariant K-theory algebra
$K_{T^n}(P^{n-1};\C)$. We have
\beq
\label{HrelK}
K_{T^n}(P^{n-1},\C)\,=\,\C[X^{\pm1}\!,\ZZ^{\pm1}]\>\Big/\Bigl\bra
\,\prod_{a=1}^n\,(X-Z_a)\Bigr\ket\,.
\vv.2>
\eeq
Here the variable $X$ corresponds to the tautological line bundle
$\mc L$ over $P^{n-1}$;
the variables $Z_1\lc Z_n$ are the equivariant parameters corresponding to the factors of $T^n$;
$\C[X^{\pm 1},\ZZ^{\pm1}]$ is the algebra of Laurent polynomials in $X$, $Z_1\lc Z_n$.

\vsk.2>
The algebra \,$K_{T^n}(P^{n-1},\C)$ \>is a module over \,$K_{T^n}(\pti\>;\C)=\C[\ZZ^{\pm1}]$.

\vsk.2>
We have a map
\bea
\rho\ :\ K_{T^n}(P^{n-1},\C)\to K_{T^n}(P^{n-1},\C), \quad f(X,\ZZ) \mapsto f(X^{-1},\ZZ^{-1}),
\eea
which sends the class of a vector bundle to the class of the dual vector bundle.

\vsk.2>
The map to a point $\psi :P^{n-1}\to \pti$ gives us the push-forward map
$\psi_* : K_{T^n}(P^{n-1},\C) \to \C[\ZZ^{\pm1}]$ \,defined by the formula
\beq
\label{pfw}
\psi_* f(X,\ZZ)\,=\,
\sum_{a=1}^n \,\frac{f(Z_a, \ZZ)}{\prod_{j\ne a}(1-Z_a/Z_j)}\,
=\, - \sum_{a=1}^n \Res_{X=Z_a} \frac{f(X,\ZZ)}{X \prod_{j=1}^n\,(1-X/Z_a)}
\eeq
The push-forward map $\psi_*$ gives
us a symmetric bilinear form on $K_{T^n}(P^{n-1},\C) $ defined by the formula
$(f, g) = \psi_*(f g)$. We are interested in its non-symmetric
sesquilinear version,
\begin{align}
\label{sqK}
A(f,g)\,&{}=\,
\psi_*(X^n\rho(f)g)\, \frac{(-1)^{n-1}}{\prod_{j=1}^n Z_j}
\\[4pt]
\notag
&{}=\,\sum_{a=1}^n\,\frac{f(Z_a^{-1},\ZZ^{-1})\,g(Z_a,\ZZ)}
{\prod_{j\ne a}(1-Z_j/Z_a)}
\,=\,
\sum_{a=1}^n \Res_{X=Z_a}
\frac{f(X^{-1},\ZZ^{-1})\,g(X,\ZZ)}{X \prod_{a=1}^n\,(1-Z_a/X)}.
\end{align}

\begin{lem}
\label{lem Kfo}
For \,$i,j\in\Z$, we have \,$A(X^i,X^j)=m_{j-i}(\ZZ)$ if \,$i\le j$\,,
and \,$A(X^i,X^j)=0$ if \,$j<i<j+n$\,.
\qed
\end{lem}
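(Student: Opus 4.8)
The plan is to compute $A(X^i,X^j)$ directly from the residue formula in \Ref{sqK}, namely
\beq
\notag
A(X^i,X^j)\,=\,\sum_{a=1}^n \Res_{X=Z_a}
\frac{X^{j-i}}{X \prod_{b=1}^n\,(1-Z_b/X)}\,,
\eeq
where I have used $\rho(X^i)=X^{-i}$. Writing $\prod_{b=1}^n(1-Z_b/X)=X^{-n}\prod_{b=1}^n(X-Z_b)$, the summand becomes
\beq
\notag
\Res_{X=Z_a}\,\frac{X^{\,j-i+n-1}}{\prod_{b=1}^n(X-Z_b)}\,,
\eeq
so that $A(X^i,X^j)$ is the sum over all poles $X=Z_a$ of the residues of the rational function $\omega(X)=X^{\,j-i+n-1}\big/\prod_{b=1}^n(X-Z_b)$. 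The key tool is the residue theorem on $\PP^1$: the sum of all residues of $\omega$, including the residue at $X=\infty$, is zero; equivalently, $\sum_{a=1}^n\Res_{X=Z_a}\omega(X)=-\Res_{X=\infty}\omega(X)=\Res_{X=0}\bigl(X^{-2}\omega(1/X)\bigr)$.

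First I would handle the vanishing claim. If $j<i<j+n$, set $k=j-i$, so $-n<k<0$, hence $-1\le k+n-1\le n-2$; thus the numerator degree $k+n-1$ is at most $n-2$, strictly less than the denominator degree $n$, and $\omega(X)$ is $O(X^{-2})$ at infinity, so $\Res_{X=\infty}\omega=0$. Therefore $\sum_a\Res_{X=Z_a}\omega=0$, which gives $A(X^i,X^j)=0$ in this range.

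Next I would treat the case $i\le j$, i.e.\ $k=j-i\ge 0$. Here I compute $\Res_{X=\infty}\omega$ via the substitution $X=1/Y$: one gets $-\Res_{Y=0}\,Y^{-2}\cdot Y^{-(k+n-1)}\big/\prod_{b=1}^n(Y^{-1}-Z_b)$, and $\prod_b(Y^{-1}-Z_b)=Y^{-n}\prod_b(1-Z_bY)$, so this equals $-\Res_{Y=0}\,Y^{-(k+1)}\big/\prod_{b=1}^n(1-Z_bY)$. Since $\prod_b(1-Z_bY)^{-1}=\sum_{m\ge 0} m_m(\ZZ)\,Y^{m}$ is the standard generating function for the complete homogeneous symmetric functions (which here I am writing as $m_m(\ZZ)$, matching the notation \Ref{sm}), the residue at $Y=0$ picks out the coefficient of $Y^{k}$, namely $m_k(\ZZ)$. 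Hence $\Res_{X=\infty}\omega=-m_{j-i}(\ZZ)$, and the residue theorem yields $A(X^i,X^j)=\sum_a\Res_{X=Z_a}\omega=-\Res_{X=\infty}\omega=m_{j-i}(\ZZ)$, as claimed.

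The only mild subtlety — hardly an obstacle — is the generating-function identity $\prod_{b=1}^n(1-Z_bY)^{-1}=\sum_{m\ge0}m_m(\ZZ)Y^m$, which is immediate by expanding each factor as a geometric series and collecting terms of total degree $m$; this is exactly the definition of $m_m(\ZZ)$ in \Ref{sm}. One should also note that the computation is valid generically in $\ZZ$ (distinct $Z_a$), and since both sides are polynomial/Laurent-polynomial in $\ZZ$, the identity extends to all $\ZZ$ by continuity. This completes the proof.
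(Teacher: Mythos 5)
Your proof is correct. The paper states Lemma \ref{lem Kfo} with no proof at all (it is marked \qed{} as an immediate computation), so there is nothing to compare against except to check your argument on its own terms: the reduction of $A(X^i,X^j)$ to $\sum_a\Res_{X=Z_a}X^{\,j-i+n-1}\big/\prod_b(X-Z_b)$ is exactly what formula \Ref{sqK} gives, and the residue theorem on $\PP^1$ then yields both cases cleanly, with the generating function $\prod_b(1-Z_bY)^{-1}=\sum_{m\ge0}m_m(\ZZ)Y^m$ identifying the answer with the paper's $m_{j-i}(\ZZ)$ from \Ref{sm}. One small imprecision worth tightening: in the vanishing case you write $-n<k<0$ hence $-1\le k+n-1$, but the integer constraint $j<i<j+n$ actually gives $0\le k+n-1\le n-2$, and the lower bound $0$ (not $-1$) is what you implicitly need — if the exponent were $-1$ the function $\omega$ would acquire a pole at $X=0$ with nonzero residue and the sum over the $Z_a$ would no longer vanish. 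Since the true range guarantees the numerator is a genuine polynomial, the only finite poles are at the $Z_a$ in both cases and your conclusion stands. The closing remark about working with generic (distinct, nonzero) $Z_a$ and extending by polynomiality is appropriate and disposes of the only remaining subtlety.
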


\section{Quantum equivariant differential equation and
\\
\qKZ/ difference equations}
\label{sQde}

\subsection {Quantum multiplication}

In enumerative geometry the multiplication in the equivariant cohomology algebra
$H_{T^n}(P^{n-1},\C)$ is deformed. The deformed
{\it quantum} multiplication depends on the {\it quantum} parameter $p$ and equivariant parameters $\zz$.
The quantum multiplication is
determined by the $\C[\zz]$-linear operator
\bea
x *_{p,\zz} : H_{T^n}(P^{n-1},\C) \to H_{T^n}(P^{n-1},\C)
\eea
of multiplication by the generator $x\in H_{T^n}(P^{n-1},\C)$. In the basis $\{1, x, \dots,x^{n-1}\}$, we have
\begin{align*}
x *_{p,\zz}x^j\>&{}=\,x^{j+1}\>, \qquad j=0\lc n-2\,,
\\
x *_{p,\zz}x^{n-1}\>&{}=\,p+x^n\>=\,
p+\sum_{i=1}^n(-1)^{i-1}s_i(\zz)\,x^{n-i}\>,
\end{align*}
where $s_i(\zz)$ are the elementary symmetric functions in $\zz$\,.$^{(1)}$

{\let\thefootnote\relax
\footnotetext{\vsk-.8>\noindent
$^{(1)}$ These formulas were explained to us by A.\,Givental}}

We also use the basis $\{g_1\lc g_n\}$\,,
\be
g_i =\prod_{a=i+1}^n (x-z_a),
\quad i=1\lc n-1\,, \quad\on{and}\quad
g_n=\>1.
\ee
In this basis we have
\begin{align*}
x*_{p,\zz} g_i\>&{}=\,z_i g_i + g_{i-1}\,, \qquad i=2\lc n\,,
\\
x*_{p,\zz} g_1\>&{}=\,z_1 g_1 + p g_n\,.
\end{align*}

\subsection{R-matrices and \qKZ/ operators}
For $a,b\in\{1\lc n\}$, $a\ne b$, define a $\C[\zz]$-linear operator
\bea
R_{ab}(u)\ :\ H_{T^n}(P^{n-1},\C)\ \to\ H_{T^n}(P^{n-1},\C),
\eea
depending on $u\in\C$ and
called the {\it R-matrix}, by the formula
\begin{alignat*}2
R_{ab}(u)\, g_i\,&{}=\,g_i\,, && i \ne a,b\,,
\\
R_{ab}(u)\,g_b\,&{}=\,g_a\,,\qquad && R_{ab}(u)\, g_a\,=\,g_b + u g_a\,.
\end{alignat*}
These $R$-matrices satisfy the Yang-Baxter equation
\be
R_{ab}(u-v) R_{ac}(u) R_{bc}(v)\,=\,R_{bc}(v) R_{ac}(u) R_{ab}(u-v),
\ee
for all distinct $a,b,c$, and the inversion relation
\be
R_{ab}(u) R_{ba}(-u)\,=\,1\,.
\ee
Define the operators $E_1,\dots E_n$ such that
\bea
E_i\, g_j \,=\, \dl_{ij}\,g_i.
\eea
Define the \qKZ/ operators $K_1\lc K_n$ by the formula
\bea
K_i = R_{i,i-1}(z_i-z_{i-1}-1) \dots R_{i,1}(z_i-z_1-1) p^{-E_i} R_{i,n}(z_i-z_n) \dots R_{i,i+1}(z_i-z_{i+1})
\eea

\subsection{Quantum differential equation and \qKZ/ difference equations}

The {\it equivariant quantum differential equation} is the differential equation
\bean
\label{qde}
\Bigl(p\frac{d}{dp} - x *_{p,\zz}\Bigr) I(p,z_1\lc z_n) =0.
\eean
The system of the {\it \qKZ/ difference equations} is the system of difference equations
\beq
\label{qKZ}
I(p,z_1, \dots, z_i-1, \dots, z_n ) = K_i(p,\zzz) I(p,\zzz),
\qquad i=1\lc n\,.\kern-1em
\eeq

In these equations the unknown function $I(p,\zz)$ takes values in the cohomology
algebra $H_{T^n}(P^{n-1},\C) $ extended
by functions in $p,\zz$.

\begin{thm}
\label{thm compa} The joint system of equations \Ref{qde} and
\Ref{qKZ} is compatible.

\end{thm}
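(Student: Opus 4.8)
The plan is to verify compatibility of the joint system by the standard two-part criterion for a differential equation coupled to a system of difference equations: first, that consecutive difference operators commute in the appropriate sense (the \qKZ/ consistency / "cocycle" condition), and second, that each difference operator intertwines the differential operator with its shift. Concretely, writing $\nabla = p\frac{d}{dp} - x*_{p,\zz}$ and letting $\tau_i$ denote the shift $z_i\mapsto z_i-1$, one must check
\begin{align*}
\tau_j\bigl(K_i(p,\zz)\bigr)\,K_j(p,\zz)\,&=\,\tau_i\bigl(K_j(p,\zz)\bigr)\,K_i(p,\zz),\qquad i\ne j,
\\
\bigl(\tau_i\nabla\bigr)\,K_i(p,\zz)\,&=\,K_i(p,\zz)\,\nabla,\qquad i=1\lc n.
\end{align*}
Here the first line ensures the difference equations are mutually consistent, and the second ensures the differential equation is preserved under each difference step.

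For the first (\qKZ/) relation I would rely purely on the $R$-matrix formalism: the operators $K_i$ are built from the $R_{ab}(u)$ via the usual "qKZ transfer" pattern, and the commutation $\tau_j(K_i)K_j = \tau_i(K_j)K_i$ is a formal consequence of the Yang–Baxter equation $R_{ab}(u-v)R_{ac}(u)R_{bc}(v) = R_{bc}(v)R_{ac}(u)R_{ab}(u-v)$, the inversion relation $R_{ab}(u)R_{ba}(-u)=1$, and the fact that $p^{-E_i}$ acts diagonally in the $\{g_j\}$ basis, with $p^{-E_i}R_{ab}(u) = R_{ab}(u)p^{-E_i}$ whenever $i\notin\{a,b\}$ and appropriately twisted $R$-matrices when $i\in\{a,b\}$. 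This is the classical argument of Frenkel–Reshetikhin adapted to this finite-dimensional (one-particle, $N=n$) setting; one tracks how the arguments $z_i - z_a - 1$ versus $z_i - z_a$ shift under $\tau_j$ and checks the bookkeeping matches a braiding move. I would present this as a lemma and indicate that it follows by the standard reasoning, spelling out only the crucial step where a shifted $R$-matrix meets $p^{-E_j}$.

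For the second relation, the key input is the explicit action of $x*_{p,\zz}$ in the $\{g_i\}$ basis, namely $x*_{p,\zz}g_i = z_i g_i + g_{i-1}$ for $i\geq 2$ and $x*_{p,\zz}g_1 = z_1 g_1 + p\,g_n$, together with the simple action of each $R_{ab}(u)$ and of $p^{-E_i}$ on the same basis. One computes how $x*_{p,\zz}$ transforms under conjugation by $R_{ab}(u)$: since the $g$-basis is permuted/triangularized by the $R$-matrices, $R_{ab}(u)^{-1}(x*_{p,\zz})R_{ab}(u)$ is again of quantum-multiplication type but with $z_a,z_b$ interchanged and the argument $u$ playing the role of the discrepancy. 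Assembling the product defining $K_i$, the net effect of conjugating $x*_{p,\zz}$ by $K_i$ is precisely to send $z_i\mapsto z_i-1$ in the operator, which is $\tau_i(x*_{p,\zz})$; one also checks the $p\frac{d}{dp}$ term is compatible because $p^{-E_i}$ contributes the correct derivative correction $p\frac{d}{dp}\circ p^{-E_i} = p^{-E_i}\circ(p\frac{d}{dp} - E_i)$ and the $R$-matrices are $p$-independent. Combining these gives $(\tau_i\nabla)K_i = K_i\nabla$.

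The main obstacle I anticipate is the careful bookkeeping in the second relation: matching the diagonal shift $z_i\mapsto z_i-1$ coming from the full chain $R_{i,i-1}(z_i-z_{i-1}-1)\cdots R_{i,1}(z_i-z_1-1)\,p^{-E_i}\,R_{i,n}(z_i-z_n)\cdots R_{i,i+1}(z_i-z_{i+1})$ against the off-diagonal $g_{i-1}$-term in $x*_{p,\zz}g_i$ and, crucially, against the "wrap-around" term $x*_{p,\zz}g_1 = z_1g_1 + p\,g_n$, where the quantum parameter $p$ first enters — this is exactly where the factor $p^{-E_i}$ must absorb the shift and produce the $p$ in the right place. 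A clean way to handle this is to pass to the fixed-point (equivariant-localization) basis where $x*_{p,\zz}$ and the $R$-matrices both become combinatorially transparent, reducing the verification to a finite check that can be organized by the cyclic structure $1\to 2\to\cdots\to n\to 1$ induced by the $g$-basis. Once the one nontrivial "wrap" is checked, compatibility follows.
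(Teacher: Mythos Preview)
Your proposal is correct and is exactly the direct verification the paper has in mind; the paper's own proof consists of the single sentence ``The proof is straightforward,'' so you have simply unpacked what that entails. The two-part criterion you state, the appeal to Yang--Baxter and inversion for the \qKZ/ consistency, and the conjugation computation in the $\{g_i\}$ basis (with the wrap-around term absorbed by $p^{-E_i}$) are precisely the checks one performs.
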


\begin{proof}
The proof is straightforward.
\end{proof}

\section{Integral representations for solutions}
\label{sec sol}

The quantum differential equation \Ref{qde} was solved by A.\,Givental in \cite{G1}. In this section
we follow \cite{TV6} and describe the integral representations for solutions
of the joint system of equations \Ref{qde} and \Ref{qKZ}.

\vsk.2>
Notice that the space of solutions to the joint system of equations \Ref{qde}
and \Ref{qKZ} is a module over the ring of scalar functions in $\zzz$,
\,1-periodic with respect to each of the variables $\zzz$.

\subsection{Master and weight functions}

Consider the variables $t, p$, $\zz=(\zzz)$.
Define the {\it master function} $\Phi$ and
$H_{T^n}(P^{n-1};\C)\<\>$-valued {\it weight function} $W$ by the formulas:
\vvn.3>
\begin{align}
\label{PHI}
\Phi(t, p,\zz)\,=\,(\>e^{\<\>\pii\,(2-n)}\>p\>)^t\,
\prod_{a=1}^n\,\Ga(z_a\<-t)\,,\qquad
W(t,\yy)\,=\,\prod_{j=1}^{n-1}\,(y_j\<-t)\,,
\end{align}
where \,$\Ga$ is the gamma function.

\subsection{Solutions as Jackson integrals}
\label{seJ}

Consider $\C$ with coordinate $p$ and $\C^n$ with coordinates $\zz=(z_1\lc z_n)$.

Let $L'$ be the $p$-line $\C$ with a cut to fix the argument of $p$, that is, we delete
from $\C$ a ray from $0$ to $ \infty$ and fix the argument of $p$ on the complement.

Let \,$L''$ \>be the complement in \,$\C^n$ \,to the union of
the hyperplanes
\beq
\label{zzh}
z_a\<-z_b\>=\>m \quad
\on{for\, all}\ a,b=1\lc n,\ \,a\ne b\,, \on{and\, all} \,m\in\Z\,.
\vv.3>
\eeq
Set
$L=L'\!\times L''\!\subset\<\C \times\C^n$.
For $J=1\lc n$ define
\vvn.3>
\beq
\label{Pshh}
\Psi_J(p, \yy,\zz)\,=\,-\sum_{r\in\Z_{\ge0}}
\Res_{t=z_J+r}\Phi(t,p,\zz)\,W(t,\yy)\,.
\eeq
These sums are called the {\it Jackson integrals}.

\begin{thm} [\cite{TV6}]
\label{thm lsol}
The functions $\Psi_J(p, \yy,\zz)$, $J=1\lc n$,
belong to
the extension of \,$H_{T^n}(P^{n-1},\C)$ \,by holomorphic functions in \,$p,\zz$
\,on the domain $L\subset \C\times \C^n$.
Each of the functions
is a solution to the joint system of equations \Ref{qde} and \Ref{qKZ}.
These functions form a basis of solutions.
\end{thm}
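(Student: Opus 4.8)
The plan is to verify the three assertions of Theorem \ref{thm lsol} in turn: (i) that each $\Psi_J$ is a well-defined function with values in (the extension of) $H_{T^n}(P^{n-1},\C)$ holomorphic on $L$; (ii) that each $\Psi_J$ solves the joint system \Ref{qde} and \Ref{qKZ}; (iii) that $\{\Psi_1\lc\Psi_n\}$ is a basis of the solution space over the ring of $1$-periodic scalar functions.

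For (i), I would first examine the residues in \Ref{Pshh}. The master function $\Phi(t,p,\zz)=(e^{\pii(2-n)}p)^t\prod_{a=1}^n\Ga(z_a-t)$ has poles in $t$ only at the points $t=z_a+r$, $r\in\Z_{\ge0}$, coming from the poles of the gamma factors; on the locus $L''$ these poles are all simple, so $\Res_{t=z_J+r}\Phi\,W$ is unambiguous and depends holomorphically on $p,\zz$. Next I would check convergence of the sum over $r$: using $\Ga(z_J-t)$ near $t=z_J+r$ together with Stirling's asymptotics for the remaining factors $\prod_{a\ne J}\Ga(z_a-z_J-r)$, the $r$-th term decays factorially once $p$ is fixed with a chosen argument (this is exactly why the cut line $L'$ is needed and why the prefactor $e^{\pii(2-n)}$ is inserted — it controls the phase). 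Then I must argue the value lies in $H_{T^n}(P^{n-1},\C)$: the weight function $W(t,\yy)=\prod_{j=1}^{n-1}(y_j-t)$ is a degree $n-1$ polynomial in $t$, so after taking residues one gets a polynomial expression in $\yy,\zz$ symmetric in $\yy$, which represents a class in the second presentation \Ref{Hrel}; equivalently, reducing modulo the defining relation expresses $\Psi_J$ in the basis $\{1,x\lc x^{n-1}\}$.

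For (ii), this is the crux and I expect it to be the main obstacle. The standard mechanism is that $\Phi$ satisfies companion first-order difference/differential relations: $p\,\frac{d}{dp}\Phi=t\,\Phi$, and shifting $t\mapsto t+1$ multiplies $\Phi$ by $e^{\pii(2-n)}p/\prod_a(z_a-t-1)$ while shifting $z_i\mapsto z_i-1$ rescales the corresponding gamma factor. One then shows that applying the operator $p\frac{d}{dp}-x*_{p,\zz}$ to $\sum_r\Res_{t=z_J+r}\Phi\,W$ produces a telescoping sum of residues that cancels, using that multiplication by $x$ corresponds, under the residue pairing, to multiplication by $t$ modulo the ideal generated by $\prod_a(x-z_a)$, together with the quantum-corrected relation $x*_{p,\zz}x^{n-1}=p+x^n$ which matches precisely the $t\mapsto t+1$ shift identity for $\Phi$. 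The \qKZ/ equations \Ref{qKZ} are handled the same way: the $z_i\mapsto z_i-1$ shift in $\Phi$ combined with the reindexing of the residue sum (the pole string based at $z_i$ shifts) reproduces the action of the \qKZ/ operator $K_i$; matching the $R$-matrix factors $R_{i,a}(z_i-z_a-1)$ and $R_{i,a}(z_i-z_a)$ to the behavior of the weight function $W$ under these shifts, and the factor $p^{-E_i}$ to the $p^{t}$ prefactor, is the delicate bookkeeping part. Rather than redo this, I would cite \cite{TV6}, where these integral representations and their compatibility with the joint system for $\tfl$ of a partial flag variety are established, and then take the appropriate limit $\tfl\to P^{n-1}$; the present $\Phi$, $W$, and operators are the specialization $\mc F_\bla=P^{n-1}$ of that construction, so the verification reduces to matching conventions.

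For (iii), I would show the $n$ functions $\Psi_J$ are linearly independent over the ring of $1$-periodic functions in $\zz$, and that the solution space has rank $n$ (it is the rank of $H_{T^n}(P^{n-1},\C)$ as a module). Linear independence follows by computing the leading asymptotics as $p\to0$ along the cut: the $r=0$ term dominates, giving $\Psi_J\sim (e^{\pii(2-n)}p)^{z_J}\prod_{a\ne J}\Ga(z_a-z_J)\cdot W(z_J,\yy)$ up to higher-order corrections, and the $n$ leading exponents $p^{z_J}$ are distinct and $\Z$-linearly independent for generic $\zz$, so no nontrivial $1$-periodic combination can vanish; the corresponding vectors $W(z_J,\yy)=\prod_{j=1}^{n-1}(y_j-z_J)$ are, after reduction, the Lagrange-interpolation-type basis and are linearly independent in $H_{T^n}(P^{n-1},\C)$. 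Since $n$ linearly independent solutions fill out a rank-$n$ free module, they form a basis. Again the cleanest route is to invoke the corresponding statement in \cite{TV6} and specialize, but the asymptotic argument above makes the independence transparent and will be reused for the gamma theorem (Theorem \ref{thm gth}) anyway.
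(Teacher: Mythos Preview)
Your proposal is correct and, at its core, aligns with the paper: the paper's own proof is nothing more than a citation to \cite{TV6}, Section~11.4, formula~(11.18), which is exactly what you propose to invoke for parts~(i) and~(ii). Your sketches of the residue convergence and of the differential/difference mechanism are accurate supplementary detail that the paper omits.

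The one genuine methodological difference is in part~(iii). The paper establishes that the $\Psi_J$ form a basis by appealing to the \emph{determinant formula}~(11.23) in \cite{TV6}, i.e., an explicit Wronskian-type identity that is manifestly nonzero. You instead argue independence via the $p\to 0$ asymptotics, using the distinct leading exponents $p^{z_J}$. Your route is perfectly valid and has the advantage of being self-contained and of feeding directly into Corollary~\ref{cor asy} and the gamma theorem; the determinant route has the advantage of giving independence at every point of $L$ in one stroke, without an analytic-continuation or genericity argument. Note also that your leading term $(e^{\pii(2-n)}p)^{z_J}\prod_{a\ne J}\Ga(z_a-z_J)\,W(z_J,\yy)$ is consistent with the paper's formula~\Ref{asy}: since $W(z_J,\yy)=\prod_{a\ne J}(z_a-z_J)\,\Dl_J$ in $H_{T^n}(P^{n-1};\C)$, the product collapses to $\prod_{a\ne J}\Ga(1+z_a-z_J)\,\Dl_J$.
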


\begin{proof} The theorem is proved in \cite[Section 11.4]{TV6}, see formula (11.18) in there.
In particular, the fact that the functions form a basis follows from the determinant formula (11.23).

In fact, in Section 11.4
the solutions to the joint system of the equivariant quantum differential equations and associated \qKZ/ equations
are described
for an arbitrary partial flag variety.
\end{proof}

The solutions $\Psi_J(p, \yy,\zz)$, $J=1\lc n$, are called the {\it $q$-hypergeometric} solutions.

\subsection{Asymptotics $p\to 0$ and equivariant gamma theorem}

\begin{cor}[{\cite[Formula (11.19)]{TV6}}]
\label{cor asy}
As $p\to 0$, we have
\beq
\label{asy}
\Psi_J(p, \yy,\zz)\,=\,
(e^{\<\>\pii\,(2-n)}\> p\>)^{z_J}\prod_{a\ne J}^n\<\Ga(1+z_a-z_J)
\Bigl(\Dl_J + \sum_{k=1}^\infty p^k\Psi_{J,k}(\yy,\zz)\Bigr),
\eeq
where the equivariant class $\Dl_J$ restricts to $1$ at the fixed point $\pti_J$
and restricts to zero at all other fixed points $\pti_I$ with $I\ne J$. The
classes $\Psi_{J,k}(\yy,\zz)$
are suitable rational functions in \,$\zz$ regular on $L''$.
\qed
\end{cor}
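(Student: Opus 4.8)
The plan is to extract the $p\to 0$ asymptotics directly from the residue definition \Ref{Pshh} of the Jackson integral. First I would isolate the exponential prefactor: in $\Phi(t,p,\zz)=(e^{\pii(2-n)}p)^t\prod_a\Ga(z_a-t)$, the residue sum is taken over the poles $t=z_J+r$, $r\in\Z_{\ge0}$, coming from the pole of $\Ga(z_J-t)$. Writing $t=z_J+r$ in the prefactor gives $(e^{\pii(2-n)}p)^{z_J}\cdot(e^{\pii(2-n)}p)^{r}$, so the common factor $(e^{\pii(2-n)}p)^{z_J}$ can be pulled out of the whole sum and the remaining series is a power series in $(e^{\pii(2-n)}p)$, hence in $p$; since $e^{\pii(2-n)}$ is a nonzero constant, absorbing it into the coefficients yields exactly the shape $(e^{\pii(2-n)}p)^{z_J}\bigl(\Dl_J+\sum_{k\ge1}p^k\Psi_{J,k}\bigr)$ claimed in \Ref{asy}.

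Next I would compute the $r=0$ term, which is the leading coefficient. We have $\Res_{t=z_J}\Ga(z_J-t)=-1$ (since $\Ga(s)$ has residue $1$ at $s=0$ and we differentiate in $t=z_J-s$), so
\[
-\Res_{t=z_J}\Phi(t,p,\zz)\,W(t,\yy)\,=\,(e^{\pii(2-n)}p)^{z_J}\,\prod_{a\ne J}\Ga(z_a-z_J)\,W(z_J,\yy).
\]
To match the stated prefactor $\prod_{a\ne J}\Ga(1+z_a-z_J)$ I would use the functional equation $\Ga(1+s)=s\,\Ga(s)$, which converts $\prod_{a\ne J}\Ga(z_a-z_J)$ into $\prod_{a\ne J}\Ga(1+z_a-z_J)\big/\prod_{a\ne J}(z_a-z_J)$; thus the leading term is
\[
(e^{\pii(2-n)}p)^{z_J}\prod_{a\ne J}\Ga(1+z_a-z_J)\;\frac{W(z_J,\yy)}{\prod_{a\ne J}(z_a-z_J)}\,,
\]
and the remaining task is to identify $W(z_J,\yy)/\prod_{a\ne J}(z_a-z_J)=\prod_{j}(y_j-z_J)/\prod_{a\ne J}(z_a-z_J)$ with the class $\Dl_J$. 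For this I would use the description \Ref{Hrel} of $H_{T^n}(P^{n-1};\C)$: localization at the fixed point $\pti_I$ sends $x\mapsto z_I$ and $\{y_1\lc y_{n-1}\}\mapsto\{z_a:a\ne I\}$, so $\prod_j(x-y_j)$ restricts at $\pti_I$ to $\prod_{a\ne I}(z_a-z_I)$ evaluated with the appropriate substitution; comparing, one checks that the element $\prod_j(x-y_j)/\prod_{a\ne J}(z_a-z_J)$ restricts to $1$ at $\pti_J$ and to $0$ at every $\pti_I$ with $I\ne J$ (the vanishing because $\prod_j(x-y_j)$ restricts at $\pti_I$ to a product including the factor $(z_J-z_J)$ when the index $J$ appears among the $y$'s), which is precisely the defining property of $\Dl_J$. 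Evaluating $W(t,\yy)$ at $t=z_J$ is legitimate because the $r=0$ residue is simple.

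Finally I would argue regularity of the coefficients $\Psi_{J,k}$. Each $r$-th residue produces $\prod_{a\ne J}\Ga(z_a-z_J-r)$ times a polynomial in $\yy$ and rational factors in $\zz$; on the domain $L''$, where $z_a-z_b\notin\Z$ for $a\ne b$, all these gamma values are finite and nonzero, so every term is a well-defined class with coefficients rational in $\zz$ and regular on $L''$, and the series converges for small $p$ by the argument already used in \cite[Section 11.4]{TV6}. The main obstacle I anticipate is bookkeeping: correctly tracking the sign $-\Res_{t=z_J}\Ga(z_J-t)=-1$ through the overall minus sign in \Ref{Pshh}, and verifying the $\Dl_J$ identification in the symmetric-function presentation rather than just at fixed points — but since \Ref{Pshh} is quoted verbatim from \cite{TV6} as formula (11.18)--(11.19), this Corollary is essentially a transcription of that computation, and the only real work is confirming the normalization constant $\prod_{a\ne J}\Ga(1+z_a-z_J)$ via the functional equation.
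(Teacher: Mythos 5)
Your computation is correct and is precisely the term-by-term residue expansion that the paper delegates to \cite[Formula (11.19)]{TV6}: the paper gives no proof beyond the citation, and your steps — pulling out $(e^{\pii(2-n)}p)^{z_J}$ from the pole at $t=z_J+r$, computing $\Res_{t=z_J}\Ga(z_J-t)=-1$, converting $\prod_{a\ne J}\Ga(z_a-z_J)$ to $\prod_{a\ne J}\Ga(1+z_a-z_J)$ via $\Ga(1+s)=s\,\Ga(s)$, identifying $W(z_J,\yy)/\prod_{a\ne J}(z_a-z_J)$ with $\Dl_J$ by restriction to fixed points, and checking regularity of the higher coefficients on $L''$ — all hold. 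The one blemish is a slip in the prose where you write $\prod_j(x-y_j)$ for the class being restricted; the object is $\prod_j(y_j-z_J)$ as in your displayed formula (with $\prod_j(z_J-y_j)$ you would pick up a spurious $(-1)^{n-1}$), and with that reading the restrictions are $1$ at $\pti_J$ and $0$ at $\pti_I$, $I\ne J$, exactly as you claim.
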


Recall that $\prod_{j=1}^{n-1}(y_j-x) \in H_{T^n}(P^{n-1},\C)$ is
the equivariant total Chern class of the tangent bundle of $P^{n-1}$ and
$x \in H_{T^n}(P^{n-1},\C)$ is the equivariant first Chern class $c_1(\mc L)$
of the tautological line bundle $\mc L$ over $P^{n-1}$.
The function $\hat\Ga_{P^{n-1}}=\prod_{a=1}^{n-1}\Ga(1+ y_a-x)$ is called
the {\it equivariant gamma class} of the tangent bundle of $P^{n-1}$.
Corollary \ref{cor asy} can be reformulated as the following statement.

\begin{thm}
\label{thm gth}
The leading term of the asymptotics as $p\to 0$ of the \,$q$-hypergeometric
solutions $(\Psi_J(p, \yy,\zz))_{J=1}^n$ is the product of the equivariant
gamma class of the tangent bundle of \,$P^{n-1}\!$ and the exponential of
the equivariant first Chern class of the tautological line bundle $\mc L$:
\vvn.1>
\beq
\label{FHEo}
\bigl(\>e^{\<\>\pii\,(2-\<\>n)}\>p\bigr)^{c_1(\mc L)}\,
\hat\Ga_{P^{n-1}}.
\vv->
\eeq
\qed

\end{thm}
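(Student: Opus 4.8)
The plan is to show that Theorem \ref{thm gth} is literally a repackaging of Corollary \ref{cor asy}, so the proof is essentially a matching of two expressions term by term. First I would recall that by Corollary \ref{cor asy} the leading term of $\Psi_J(p,\yy,\zz)$ as $p\to 0$ is
\[
(e^{\<\>\pii\,(2-n)}\>p)^{z_J}\,\prod_{a\ne J}^n\Ga(1+z_a-z_J)\,\Dl_J ,
\]
so I need only identify, for each fixed point $\pti_J$, the restriction of the proposed class \eqref{FHEo} to $\pti_J$ with this scalar. The restriction map $H_{T^n}(P^{n-1},\C)\to H_{T^n}(\pti_J;\C)=\C[\zz]$ sends $x=c_1(\mc L)$ to $z_J$ (since the fiber of $\mc L$ at $\pti_J$ is the coordinate line spanned by $u_J$, carrying weight $z_J$), and sends the Chern roots $y_1,\dots,y_{n-1}$ of $\C^n/F$ at $\pti_J$ to the weights $\{z_a : a\ne J\}$ in some order. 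Hence at $\pti_J$ the equivariant gamma class $\hat\Ga_{P^{n-1}}=\prod_{a=1}^{n-1}\Ga(1+y_a-x)$ restricts to $\prod_{a\ne J}\Ga(1+z_a-z_J)$, and the exponential factor $(e^{\<\>\pii\,(2-n)}p)^{c_1(\mc L)}$ restricts to $(e^{\<\>\pii\,(2-n)}p)^{z_J}$. Comparing, the restriction of \eqref{FHEo} to $\pti_J$ equals exactly the coefficient of $\Dl_J$ in \eqref{asy}.

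Since $\{\Dl_J\}_{J=1}^n$ is the basis of $H_{T^n}(P^{n-1},\C)$ dual to the fixed points (the localization basis), a class is determined by its restrictions to the fixed points, so this termwise agreement shows that the leading term of the column vector $(\Psi_J(p,\yy,\zz))_{J=1}^n$ is precisely $(e^{\<\>\pii\,(2-n)}p)^{c_1(\mc L)}\,\hat\Ga_{P^{n-1}}$, interpreting the latter as the diagonal operator whose $J$-th entry is its restriction at $\pti_J$; equivalently, each solution $\Psi_J$ has leading term the $\pti_J$-component of this class. This is the content of Theorem \ref{thm gth}, so I would simply write that it follows by rewriting \eqref{asy}.

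The only point requiring a word of care — and the one I would flag explicitly — is the identification of the restrictions of $y_1,\dots,y_{n-1}$ at $\pti_J$. This is read off from the defining relation \eqref{Hrel}: substituting $x\mapsto z_J$ forces $(u-z_J)\prod_{j}(u-y_j)=\prod_a(u-z_a)$ at that fixed point, so $\prod_j(u-y_j)=\prod_{a\ne J}(u-z_a)$, giving $\{y_j\}=\{z_a:a\ne J\}$ as multisets; since $\hat\Ga_{P^{n-1}}$ is symmetric in the $y_j$ the ordering is irrelevant. Beyond this there is genuinely no obstacle: the substance of the theorem lies in Theorem \ref{thm lsol} and Corollary \ref{cor asy}, both already established (in \cite{TV6}), and the present statement is their geometric reinterpretation via the gamma class. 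One could optionally add the remark that the same computation shows $\hat\Ga_{P^{n-1}}$ is well-defined as an element of the completed equivariant cohomology (its restrictions at all fixed points are invertible formal power series in the weights), but this is not needed for the statement as phrased.
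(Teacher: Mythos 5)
Your proposal is correct and matches the paper's intent exactly: the paper offers no proof beyond the remark that the theorem is a reformulation of Corollary \ref{cor asy}, and your fixed-point restriction computation ($x\mapsto z_J$, $\{y_j\}\mapsto\{z_a: a\ne J\}$, hence $\hat\Ga_{P^{n-1}}\mapsto\prod_{a\ne J}\Ga(1+z_a-z_J)$) is precisely the verification that justifies that remark.
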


This assertion is an equivariant analog of Dubrovin's gamma theorem for $P^{n-1}$,
see \cite{D1, D2} and also \cite{KKP, GGI, GI, GZ, CDG}.

\subsection{Solutions as elements of the equivariant \,$K\<$-theory}
\label{equivkth}

Introduce new functions:
\bean
\label{Zdd}
\Tdd =e^{2\pii\, t}, \quad
\Zdd_j=e^{2\pii \,z_j},
\quad j=1\lc n.
\eean
Denote ${\bs \Zdd}=(\Zdd_1\lc\Zdd_n)$.

\vsk.2>
Let \,$Q(X, \ZZ)\in\C[X^{\pm1},\ZZ^{\pm1}]$ \,be a Laurent polynomial.
Define
\beq
\label{PPI}
\Psi_Q(p, \yy, \zz)\,=\,
\sum_{J=1}^n\,Q(\Zdd_J,{\bs \Zdd})\,\Psi_J(p, \yy,\zz).
\vv-.3>
\eeq

Clearly, $\Psi_Q(p, \yy, \zz)$ is a solution on the domain $L\subset \C\times \C^n$ of the joint system
\Ref{qde} and \Ref{qKZ},
as a linear combination of solutions
$\Psi_J(p, \yy,\zz)$ with coefficients, 1-periodic with respect to $\zzz$ and independent of $p$.

\smallskip

It is also clear that if $Q$ lies in the ideal in $\C[X^{\pm1},\ZZ^{\pm1}]$
generated by the polynomial $\prod_{a=1}^n\,(X-Z_a)$, then
$\Psi_Q(p, \yy, \zz)$ is the zero solution. Hence formula \Ref{PPI} defines
a map
$Q\mapsto \Psi_Q(p, \yy, \zz)$ from
the equivariant K-theory algebra $K_{T^n}(P^{n-1},\C)$ to the space of solutions on
the domain $L$ to the joint system \Ref{qde} and \Ref{qKZ}.

\subsection{Solutions $\Psi^m$}
For $m\in \Z$\,, denote by $\Psi^m(p, \yy, \zz)$ the solution $\Psi_Q(p, \yy, \zz)$ corresponding to the Laurent polynomial $Q=X^{m-1}$.

\begin{cor}
\label{cor z-rel}
For any $k\in \Z$\,, we have
\bean
\label{Psi reln}
\sum_{i=0}^n (-1)^{n-i} s_{n-i}({\bs \Zdd}) \,\Psi^{k+i}(p, \yy, \zz)\ =\ 0,
\eean
where \,$s_0({\bs\Zdd})\lc s_n({\bs\Zdd})$ are the elementary symmetric functions in ${\bs \Zdd}$.
\qed
\end{cor}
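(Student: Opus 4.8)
The plan is to deduce Corollary \ref{cor z-rel} directly from the defining relation \Ref{HrelK} of the equivariant K-theory algebra together with the fact, established just above, that the assignment $Q\mapsto\Psi_Q(p,\yy,\zz)$ is well-defined on $K_{T^n}(P^{n-1},\C)$ and in particular kills the ideal generated by $\prod_{a=1}^n(X-Z_a)$. First I would observe that, by definition, $\Psi^{k+i}(p,\yy,\zz)=\Psi_{X^{k+i-1}}(p,\yy,\zz)$, so the left-hand side of \Ref{Psi reln} equals $\Psi_Q(p,\yy,\zz)$ for the single Laurent polynomial
\[
Q\,=\,\sum_{i=0}^n(-1)^{n-i}s_{n-i}(\ZZ)\,X^{k+i-1}\,=\,X^{k-1}\sum_{i=0}^n(-1)^{n-i}s_{n-i}(\ZZ)\,X^{i}\,,
\]
here using $\C$-linearity of the map $Q\mapsto\Psi_Q$ in $Q$ over the ring of $1$-periodic functions of $\zz$, and the fact that the coefficients $s_{n-i}(\ZZ)=s_{n-i}({\bs\Zdd})$ are exactly such $1$-periodic functions (they are polynomials in the $\Zdd_j=e^{2\pii z_j}$).

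Next I would identify the inner sum with $\prod_{a=1}^n(X-Z_a)$ up to sign. Expanding $\prod_{a=1}^n(X-Z_a)=\sum_{i=0}^n(-1)^{n-i}s_{n-i}(\ZZ)X^{i}$ by the elementary-symmetric-function expansion, we get precisely $Q=X^{k-1}\prod_{a=1}^n(X-Z_a)$. Thus $Q$ lies in the ideal generated by $\prod_{a=1}^n(X-Z_a)$ in $\C[X^{\pm1},\ZZ^{\pm1}]$, so by the discussion in Subsection \ref{equivkth} (the paragraph stating that $\Psi_Q$ is the zero solution for such $Q$) we conclude $\Psi_Q(p,\yy,\zz)=0$, which is exactly \Ref{Psi reln}.

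The argument is essentially a bookkeeping computation, so there is no serious obstacle; the only point requiring a moment's care is the passage from the variables $Z_a$ appearing in the K-theory relation to the variables $\Zdd_j=e^{2\pii z_j}$ appearing in the statement: one must check that under the map $Q\mapsto\Psi_Q$ the substitution $X\mapsto\Zdd_J$ at the $J$-th term is consistent with the identification $Z_a\leftrightarrow\Zdd_a$, so that the elementary symmetric polynomials $s_{n-i}(\ZZ)$ evaluated on the $\Zdd$'s are the correct scalar coefficients. This is immediate from formula \Ref{PPI} together with \Ref{Zdd}, since $\Psi_Q(p,\yy,\zz)=\sum_{J}Q(\Zdd_J,{\bs\Zdd})\Psi_J$ and $Q=X^{k-1}\prod_a(X-Z_a)$ evaluates at $X=\Zdd_J$, $Z_a=\Zdd_a$ to $\Zdd_J^{\,k-1}\prod_a(\Zdd_J-\Zdd_a)=0$ for every $J$, because the factor with $a=J$ vanishes. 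Hence each term of $\Psi_Q$ is zero, giving the corollary.
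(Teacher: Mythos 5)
Your proof is correct and is exactly the argument the paper intends: the corollary is stated with an immediate \qed because, as you observe, the left-hand side is $\Psi_Q$ for $Q=X^{k-1}\prod_{a=1}^n(X-Z_a)$, which lies in the ideal killed by the map $Q\mapsto\Psi_Q$ (equivalently, each coefficient $Q(\Zdd_J,{\bs\Zdd})$ vanishes because of the factor $a=J$). Nothing is missing.
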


\begin{thm}
[{\cite[Theorem 11.3]{TV6}}]
\label{thm onto} For any $k\in \Z$, the solutions $\Psi^{k+m}(p, \yy, \zz)$,
$m=0\lc n-1$, form a basis of the space of solutions on the domain $L$ of the joint system
\Ref{qde} and \Ref{qKZ}.
\end{thm}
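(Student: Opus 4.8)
The plan is to deduce this from Corollary \ref{cor z-rel} together with the already-established basis result, Theorem \ref{thm lsol}. First I would fix $k\in\Z$ and consider the $\C$-span (over the ring of $1$-periodic functions in $\zz$) of $\{\Psi^{k},\Psi^{k+1},\dots,\Psi^{k+n-1}\}$. The recursion \Ref{Psi reln} for the index $k$, namely $\sum_{i=0}^n(-1)^{n-i}s_{n-i}(\Zdd)\,\Psi^{k+i}=0$, has leading coefficient $s_0(\Zdd)=1$ in front of $\Psi^{k+n}$, so it expresses $\Psi^{k+n}$ as a linear combination of $\Psi^{k},\dots,\Psi^{k+n-1}$ with coefficients that are Laurent polynomials in $\Zdd$ (hence $1$-periodic functions of $\zz$). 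Applying the recursion at $k+1$ then writes $\Psi^{k+n+1}$ in terms of $\Psi^{k+1},\dots,\Psi^{k+n}$, hence in terms of $\Psi^{k},\dots,\Psi^{k+n-1}$; by induction upward all $\Psi^{k+m}$ with $m\ge 0$ lie in this span. Similarly, since the constant term $(-1)^n s_n(\Zdd)=(-1)^n\prod_j\Zdd_j$ is a unit in the ring of $1$-periodic functions (it is an invertible Laurent monomial in $\Zdd$), the recursion also solves for $\Psi^{k}$ in terms of $\Psi^{k+1},\dots,\Psi^{k+n}$, and running it downward shows every $\Psi^{k+m}$ with $m<0$ lies in the span as well. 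Thus the $n$ solutions $\Psi^{k},\dots,\Psi^{k+n-1}$ span the same module as the full collection $\{\Psi^{m}\}_{m\in\Z}$, which by construction \Ref{PPI} and the surjectivity statement (Theorem \ref{thm onto}'s source, or equivalently the fact that $1,X,\dots,X^{n-1}$ span $K_{T^n}(P^{n-1},\C)$) spans the whole space of solutions.

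It then remains to check linear independence of $\Psi^{k},\dots,\Psi^{k+n-1}$ over the ring of $1$-periodic scalar functions. For this I would invoke Theorem \ref{thm lsol}: the $\Psi_J$, $J=1\lc n$, form a basis of solutions, with the nondegeneracy made explicit by the determinant formula (11.23) of \cite{TV6}. By \Ref{PPI}, $\Psi^{k+m}=\sum_{J=1}^n \Zdd_J^{\,k+m-1}\,\Psi_J$, so the transition matrix from $(\Psi_J)_{J=1}^n$ to $(\Psi^{k+m})_{m=0}^{n-1}$ is $\bigl(\Zdd_J^{\,k+m-1}\bigr)_{m,J}$, a Vandermonde-type matrix with determinant $\prod_{J}\Zdd_J^{\,k-1}\cdot\prod_{J<J'}(\Zdd_{J'}-\Zdd_J)$. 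On the domain $L''$ the $z_a$ avoid the hyperplanes \Ref{zzh}, so $\Zdd_J=e^{2\pii z_J}$ are pairwise distinct and this determinant is a nonvanishing $1$-periodic function; hence the change of basis is invertible over the relevant ring and $(\Psi^{k+m})_{m=0}^{n-1}$ is again a basis.

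The only genuine subtlety — the ``main obstacle,'' though it is mild — is bookkeeping about the coefficient ring: one must be careful that ``basis'' is meant in the sense of a free module over the ring of scalar functions in $\zz$ that are $1$-periodic in each $z_a$, and that the coefficients $s_{n-i}(\Zdd)$ appearing in \Ref{Psi reln}, as well as the Vandermonde factors $\Zdd_{J'}-\Zdd_J$, are units or nonzero-divisors in exactly that ring when restricted to $L''$. Once this is set up, both the spanning argument (upward and downward recursion) and the independence argument (the Vandermonde determinant is invertible) are routine, and the theorem follows. Alternatively, and perhaps more cleanly, one can phrase the whole argument on the $K$-theory side: multiplication by $X$ on $K_{T^n}(P^{n-1},\C)=\C[X^{\pm1},\ZZ^{\pm1}]/\langle\prod_a(X-Z_a)\rangle$ is invertible, so $\{X^{k},\dots,X^{k+n-1}\}$ is a basis for every $k\in\Z$; since $Q\mapsto\Psi_Q$ is an isomorphism from this algebra onto the space of solutions (this is precisely the content of \cite[Theorem 11.3]{TV6} combined with Theorem \ref{thm lsol}), the images $\Psi^{k},\dots,\Psi^{k+n-1}$ form a basis of solutions.
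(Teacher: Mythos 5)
Your argument is correct. Note first that the paper does not actually prove this statement: it is imported wholesale as \cite[Theorem 11.3]{TV6}, so there is no in-paper proof to match; the only related in-paper ingredient is the remark, in the proof of Theorem \ref{thm lsol}, that the $\Psi_J$ form a basis by the determinant formula (11.23) of \cite{TV6}. Your second paragraph is the real proof and it is clean and self-contained: since $\Psi^{k+m}=\sum_J \Zdd_J^{\,k+m-1}\Psi_J$, the transition matrix from the basis $(\Psi_J)$ of Theorem \ref{thm lsol} is $\bigl(\Zdd_J^{\,k+m-1}\bigr)$, whose determinant $\prod_J\Zdd_J^{\,k-1}\prod_{J<J'}(\Zdd_{J'}-\Zdd_J)$ is a $1$-periodic holomorphic function nonvanishing on $L''$ (the deleted hyperplanes $z_a-z_b\in\Z$ are exactly what make $\Zdd_{J'}\ne\Zdd_J$ there), so the change of basis is invertible over the coefficient ring and both spanning and independence follow at once. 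Two small remarks: your first paragraph (the upward/downward recursion from \Ref{Psi reln}) is redundant once you have the invertible transition matrix, since that already handles arbitrary $k$ directly; and your ``alternative'' K-theoretic phrasing at the end is circular as stated, because the assertion that $Q\mapsto\Psi_Q$ is an isomorphism onto the solution space is essentially the theorem being proved (it is how Corollary \ref{cor KS} is later deduced \emph{from} this theorem), so you should lean on the Vandermonde argument rather than that reformulation.
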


\subsection{Module $\mc S_n$}

The space of solutions of the joint system of equations \Ref{qde} and \Ref{qKZ} is a
module over the algebra of functions in $z_1\lc z_n$, which are
1-periodic with respect to each variable.

\vsk.2>
We will consider the space $\mc S_n$ of solutions of the form
\bean
\label{def sol}
\sum_{m=1}^n Q_m(\bs\Zdd)\,\Psi^{m}(p, \yy, \zz), \qquad \on{where}\ \
Q_m(\ZZ)\in \C[\ZZ^{\pm1}].
\eean
This space is a $\C[\ZZ^{\pm1}]$-module, in which multiplication by $Q(\ZZ)$ is
defined as multiplication
by $Q(\bs\Zdd)$.
With this choice of the space of solutions,
we allow ourselves to multiply solutions $\Psi^{m}(p, \yy, \zz)$ only
by \,1-periodic functions of the form $Q_m(\bs\Zdd)$, where $Q(\ZZ)\in
\C[\ZZ^{\pm1}]$.

\vsk.2>
By Corollary \ref{cor z-rel}, the module $\mc S_n$ contains all solutions
$\Psi^{m}(p, \yy, \zz)$, $m\in\Z$.

\begin{cor}
\label{cor KS}

The module $\mc S_n$ contains a basis of solutions of the joint system
\Ref{qde} and \Ref{qKZ}. Moreover, the map $\theta : K_{T^n}(P^{n-1},\C) \to
\mc S_n$, defined by
\bean
\label{theta}
\theta : X^{m-1} \mapsto \Psi^{m}(p, \yy, \zz), \qquad m\in \Z,
\eean
is an isomorphism of the $\C[\ZZ^{\pm1}]$-modules.
\end{cor}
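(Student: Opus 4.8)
The plan is to combine Corollary \ref{cor z-rel} with Theorem \ref{thm onto} (equivalently Theorem \cite[Theorem 11.3]{TV6}) and the defining relation of the equivariant $K$-theory algebra \Ref{HrelK}. First I would observe that the map $\theta$ is well-defined: by \Ref{ms} the elementary symmetric functions $s_k(\bs\Zdd)$ satisfy the same recursion linking them to the $m_i(\bs\Zdd)$, but more to the point, the ideal $\bigl\langle\prod_{a=1}^n(X-Z_a)\bigr\rangle$ in $\C[X^{\pm1},\ZZ^{\pm1}]$ is spanned over $\C[\ZZ^{\pm1}]$ by the Laurent polynomials $\sum_{i=0}^n(-1)^{n-i}s_{n-i}(\ZZ)\,X^{k+i}$, $k\in\Z$; Corollary \ref{cor z-rel} says precisely that these are sent to the zero solution. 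Hence $\theta$, a priori defined on $\C[X^{\pm1},\ZZ^{\pm1}]$ by $X^{m-1}\otimes Q(\ZZ)\mapsto Q(\bs\Zdd)\,\Psi^m(p,\yy,\zz)$ and $\C[\ZZ^{\pm1}]$-linear, descends to the quotient $K_{T^n}(P^{n-1},\C)$.

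Next I would check surjectivity and that the image lands in $\mc S_n$. By definition $\mc S_n$ consists of the finite sums $\sum_{m=1}^n Q_m(\bs\Zdd)\,\Psi^m(p,\yy,\zz)$, and $\theta$ maps the $\C[\ZZ^{\pm1}]$-module generators $1,X,\dots,X^{n-1}$ of $K_{T^n}(P^{n-1},\C)$ to $\Psi^1,\dots,\Psi^n$ respectively; since these generate $\mc S_n$ as a $\C[\ZZ^{\pm1}]$-module by construction, $\theta$ is onto $\mc S_n$. For injectivity I would use that $K_{T^n}(P^{n-1},\C)$ is a free $\C[\ZZ^{\pm1}]$-module of rank $n$ with basis $1,X,\dots,X^{n-1}$ (immediate from \Ref{HrelK}), so any nonzero element is $\sum_{m=1}^n Q_m(\ZZ)X^{m-1}$ with not all $Q_m$ zero; its image is $\sum_{m=1}^n Q_m(\bs\Zdd)\,\Psi^m(p,\yy,\zz)$. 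By Theorem \ref{thm onto} with $k=1$, the solutions $\Psi^1,\dots,\Psi^n$ form a basis of the space of solutions over the ring of $1$-periodic scalar functions; since the $Q_m(\bs\Zdd)$ are such $1$-periodic functions and are not all identically zero (a nonzero Laurent polynomial in $\ZZ$ gives a nonzero function of $\zz$ via $\Zdd_j=e^{2\pii z_j}$), the image is a nontrivial linear combination of basis solutions, hence nonzero. This gives injectivity and completes the proof that $\theta$ is an isomorphism of $\C[\ZZ^{\pm1}]$-modules. The statement that $\mc S_n$ contains a basis of solutions is then just the fact that $\theta(1),\dots,\theta(X^{n-1})=\Psi^1,\dots,\Psi^n$ is such a basis, again by Theorem \ref{thm onto}.

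The one point requiring a little care — the main obstacle, such as it is — is the passage from "$Q(\ZZ)$ is a nonzero Laurent polynomial" to "$Q(\bs\Zdd)$ is a nonzero function on the relevant domain," and the interplay between the $\C[\ZZ^{\pm1}]$-module structure on $\mc S_n$ (where multiplication is by $Q(\bs\Zdd)$) and the larger module of all $1$-periodic scalar functions appearing in Theorem \ref{thm onto}. One must make sure the linear independence of $\Psi^1,\dots,\Psi^n$ over the big ring of $1$-periodic functions specializes correctly to linear independence over the subring of functions of the form $Q(\bs\Zdd)$ — but this is automatic, since a relation $\sum Q_m(\bs\Zdd)\Psi^m=0$ with $Q_m\in\C[\ZZ^{\pm1}]$ is in particular a relation over the big ring, forcing each $Q_m(\bs\Zdd)\equiv0$ and hence $Q_m\equiv 0$ because $z\mapsto e^{2\pii z}$ has dense image and the $Z_j$ are algebraically independent. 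Everything else is formal bookkeeping with the free module \Ref{HrelK} and Corollaries \ref{cor z-rel}.
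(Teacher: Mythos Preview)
Your proof is correct and follows the same route as the paper, which simply writes ``The corollary follows from Theorem \ref{thm onto}.'' You have just spelled out the details the paper leaves implicit: well-definedness of $\theta$ (already handled in Section~\ref{equivkth} via the observation that $Q$ in the ideal $\bigl\langle\prod_a(X-Z_a)\bigr\rangle$ gives $\Psi_Q=0$), and the injectivity/surjectivity, both of which reduce to the fact that $\Psi^1,\dots,\Psi^n$ form a basis of solutions by Theorem~\ref{thm onto}.
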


\begin{proof}
The corollary follows from Theorem \ref{thm onto}.
\end{proof}

\vsk.2>

Using the isomorphism $\theta$ we define a sesquilinear form $A$
on $\mc S_n$ as the image of the form $A$ on $K_{T^n}(P^{n-1},\C)$.

\subsection{Monodromy of the quantum differential equation}
The equivariant quantum differential equation \Ref{qde} has two singular points. A regular singular point at
$p=0$ and an irregular singular point at $p=\infty$.

Fix $(p,\zz)$ and increase the argument of $p$ by $2\pi$. The analytic continuation of the solutions
along this curve will produce the
{\it monodromy}
operator $M(\zz)$ on the space of solutions.

\begin{thm}
\label{thm mdr}
For every $m\in \Z$ we have $M(\zz) : \Psi^{m}(p, \yy, \zz) \mapsto \Psi^{m+1}(p, \yy, \zz)$. In particular, for any $k\in\Z$,
the matrix of the monodromy operator in the basis $\{\Psi^{k+m}(p, \yy, \zz)\, | \, m=0\lc n-1\}$ is the companion matrix of the polynomial
$X^n-s_{1}({\bs \Zdd}) X^{n-1} + \dots +(-1)^ns_{n}({\bs \Zdd})$, that defines the relation in the equivariant K-theory algebra,
\bea
\left(
	\begin{matrix}
0 & 0 & \dots &\dots & 0 & (-1)^{n+1}s_{n}({\bs \Zdd})
\\
1 & 0 & \dots &\dots & 0 & (-1)^ns_{n-1}({\bs \Zdd}) \\
0 & 1 & \dots & \dots & \dots &\dots \\
	\dots & \dots & \dots & \dots & \dots & \dots \\
	0 & 0 & \dots&\dots &0 &-s_{2}({\bs \Zdd})\\
	0 & 0 &\dots & \dots &1 & s_{1}({\bs \Zdd})\\
	\end{matrix}
	\right).
	\eea

\end{thm}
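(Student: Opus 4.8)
The plan is to prove the monodromy statement directly from the integral (Jackson-sum) representation of the $q$-hypergeometric solutions, and then deduce the matrix form from the K-theory relation of Corollary \ref{cor z-rel}. First I would examine the effect of the analytic continuation $p\mapsto e^{2\pii}p$ on the building blocks of $\Psi_J$. The only $p$-dependence in $\Phi(t,p,\zz)$ is through the factor $(e^{\pii(2-n)}p)^t$, so continuing the argument of $p$ by $2\pi$ multiplies this factor by $e^{2\pii\,t}=\Tdd$. In the Jackson sum $\Psi_J(p,\yy,\zz)=-\sum_{r\ge0}\Res_{t=z_J+r}\Phi(t,p,\zz)W(t,\yy)$, the residues are taken at $t=z_J+r$, so on the residue locus $\Tdd=e^{2\pii(z_J+r)}=e^{2\pii z_J}=\Zdd_J$. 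Hence continuation multiplies each term of the sum defining $\Psi_J$ by the \emph{constant} $\Zdd_J$, giving $M(\zz):\Psi_J\mapsto \Zdd_J\,\Psi_J$. (One should check that the contour/cut in $L'$ can be deformed back so that the continued function is again the Jackson sum at the original value of $p$; this is the standard statement that the monodromy acts within the space of solutions, and is exactly the kind of contour-deformation argument used for such $q$-hypergeometric integrals.)

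Next I would translate this into the basis $\Psi^m$. By definition $\Psi^m(p,\yy,\zz)=\Psi_{X^{m-1}}(p,\yy,\zz)=\sum_{J=1}^n \Zdd_J^{\,m-1}\Psi_J(p,\yy,\zz)$. Applying $M(\zz)$ and using $M(\zz):\Psi_J\mapsto\Zdd_J\Psi_J$ gives
\[
M(\zz):\Psi^m(p,\yy,\zz)\ \longmapsto\ \sum_{J=1}^n \Zdd_J^{\,m}\,\Psi_J(p,\yy,\zz)\ =\ \Psi^{m+1}(p,\yy,\zz),
\]
which is the first assertion. The point here is that multiplying the coefficient of $\Psi_J$ by $\Zdd_J$ is, under the identification $\theta$ of Corollary \ref{cor KS}, exactly multiplication by the class $X$ in $K_{T^n}(P^{n-1},\C)$, so $M(\zz)$ is intertwined by $\theta$ with multiplication by $X$.

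Finally, the matrix statement. Fix $k$ and use the ordered basis $\{\Psi^{k},\Psi^{k+1},\dots,\Psi^{k+n-1}\}$, which is a basis by Theorem \ref{thm onto}. For $m=k,\dots,k+n-2$ we have $M(\zz):\Psi^m\mapsto\Psi^{m+1}$, accounting for the subdiagonal of $1$'s; for the last basis vector, $M(\zz):\Psi^{k+n-1}\mapsto\Psi^{k+n}$, and Corollary \ref{cor z-rel} (with the appropriate shift) expresses $\Psi^{k+n}$ as $\sum_{i=0}^{n-1}(-1)^{n-1-i}s_{n-i}(\bs\Zdd)\,\Psi^{k+i}$, which is precisely the last column of the displayed companion matrix of $X^n-s_1(\bs\Zdd)X^{n-1}+\dots+(-1)^n s_n(\bs\Zdd)$. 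I expect the only genuine subtlety to be the justification that the analytic continuation $p\mapsto e^{2\pii}p$ carries each $\Psi_J$ back to a solution expressible again as a Jackson sum (so that the formula $M(\zz)\Psi_J=\Zdd_J\Psi_J$ is literally correct, not merely up to mixing); everything after that is bookkeeping with the linear change of basis $\Psi^m=\sum_J\Zdd_J^{m-1}\Psi_J$ and the K-theory relation.
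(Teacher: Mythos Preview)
Your proposal is correct and follows essentially the same argument as the paper: observe that shifting $\arg p$ by $2\pi$ multiplies each term of the Jackson sum \eqref{Pshh} by $e^{2\pii z_J}=\Zdd_J$, so $M(\zz)\Psi_J=\Zdd_J\Psi_J$ and hence $M(\zz)\Psi^m=\Psi^{m+1}$, after which the companion-matrix form is read off from the K-theory relation. The paper's proof is more terse but identical in content; your remark about contour deformation is not an issue here since the Jackson sums are given as convergent series and the continuation acts termwise.
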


\begin{proof}
The shift of the argument of $p$ by $2\pi$ leads to multiplication
by $e^{2\pii\, z_J}$ of each term in the sum in \Ref{Pshh}. This means
that $M(\zz) : \Psi_J(p, \yy,\zz) \mapsto e^{2\pii \,z_J}\Psi_J(p, \yy,\zz)$\,,
and hence
$M(\zz) : \Psi^{m}(p, \yy, \zz) \mapsto \Psi^{m+1}(p, \yy, \zz)$ for any $m\in\Z$.
Now the shape of the monodromy matrix in the basis
$\{\Psi^{k+m}(p, \yy, \zz)\, | \, m=0\lc n-1\}$ follows from relation \Ref{HrelK} in the K-theory algebra.
\end{proof}

\subsection{Solutions as integrals over a parabola}
For \,$A\in\C$\,, let \,$C(A)\subset\C$ \,be the parabola with the following
parametrization:
\vvn.3>
\beq
\label{CA}
C(A)\,=\,\{\bigl(A+s^2\<\<+s\>\sqrt{\<-1}\>\bigr)\ \,|\ \,s\in\R\,\}\,.
\vv.2>
\eeq
Given \,$\zz$, take \>$A$ \,such that all the points \,$\zzz$ lie
inside \,$C(A)$\,. The integral
\Ref{PshP} below does not depend on a particular choice of $A$\,,
so we will denote $C(A)$ by $C(\zz)$.

\begin{lem}
[{\cite[Lemma 11.5]{TV6}}]

\label{lem int}
For any Laurent polynomial \,$Q(X,\ZZ)$ we have
\beq
\label{PshP}
\Psi_Q(p,\yy,\zz) \,=\,\frac1{2\pii}\,
\int_{C(\zz)} Q(\Tdd;{\bs \Zdd})\,
\Phi(t, p,\zz)\,W(t,\yy)\, dt,
\eeq
where the integral converges for any $(p,\zz) \in L$.
\end{lem}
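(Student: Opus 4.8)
The plan is to relate the two descriptions of $\Psi_Q$: the one as a sum of Jackson integrals coming from \Ref{Pshh} and \Ref{PPI}, and the one as a contour integral over the parabola $C(\zz)$. The key observation is that the parabola $C(\zz)$ encloses all the points $\zzz$ but, being a parabola opening to the right (in the $\Re t$ direction), it does \emph{not} enclose the points $z_J+r$ for $r\in\Z_{\ge 0}$ unless one closes the contour. So the first step will be to deform $C(\zz)$ to the right and pick up residues.

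First I would fix $Q(X,\ZZ)$ and $(p,\zz)\in L$, and examine the integrand $Q(\Tdd;\Zdd)\,\Phi(t,p,\zz)\,W(t,\yy)$ on the parabola. Its only singularities in $t$ are the poles of $\Phi$, which by \Ref{PHI} occur exactly at $t=z_a+r$, $a=1\lc n$, $r\in\Z_{\ge0}$ (the poles of $\Ga(z_a-t)$). All of these lie to the right of $C(\zz)$ by the choice of $A$. I would then push the parabola off to $\Re t=+\infty$, write the integral as $2\pii$ times the sum of residues picked up — which by \Ref{Pshh} is precisely $-\sum_{J=1}^n\sum_{r\ge0}\Res_{t=z_J+r}(\cdots)$ once one notes that $Q(\Tdd;\Zdd)=Q(e^{2\pii t};e^{2\pii\zz})$ restricts to $Q(\Zdd_J;\Zdd)=Q(e^{2\pii z_J};e^{2\pii\zz})$ at $t=z_J+r$ because $e^{2\pii(z_J+r)}=e^{2\pii z_J}$. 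Matching this against \Ref{PPI}, we get exactly $\Psi_Q(p,\yy,\zz)=\frac1{2\pii}\int_{C(\zz)}Q(\Tdd;\Zdd)\,\Phi\,W\,dt$.

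The main obstacle is justifying the contour deformation: one must show the integral over the far arc (or over a large vertical-ish segment at $\Re t=R$ together with the tails of two shifted parabolas) tends to zero as $R\to\infty$, so that the original integral equals the convergent sum of all residues to the right. This is where the precise shape of the parabola matters. On $C(A)$ one has $\Re t=A+s^2$ and $\Im t=s$, so $|t|$ grows like $s^2$; the factor $p^t=e^{t\log p}$ contributes $e^{(A+s^2)\log|p|-s\arg p}$, and the product of gamma functions $\prod_a\Ga(z_a-t)$ decays like $|\Ga(-t)|^n\sim$ (by Stirling) $e^{-n(\Re t\log|t|-\Re t)+O(|t|)}$, i.e. super-exponentially in $s^2\log s$, which kills the polynomial weight $W$ and the Laurent polynomial $Q(e^{2\pii t};\cdots)$ (the latter only grows like $e^{C|s|}$ since $|e^{2\pii t}|=e^{-2\pi s}$). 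The same Stirling estimate, applied on the shifted parabolas and on a connecting segment at large $\Re t$, shows the arc contribution vanishes and guarantees absolute convergence of the residue sum, which is the content already implicit in Theorem \ref{thm lsol} and Corollary \ref{cor asy}. I would cite \cite[Lemma 11.5]{TV6} for these estimates, since the statement is quoted from there, and present the residue-counting deformation as the substance of the argument; the independence of the answer from the choice of $A$ then follows because moving $A$ (keeping all $z_a$ inside) sweeps out a region containing no poles of the integrand.
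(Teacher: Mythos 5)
The paper offers no proof of this lemma at all --- it is quoted verbatim from \cite[Lemma 11.5]{TV6} --- so there is no internal argument to compare against; your proof (push $C(\zz)$ rightward past the poles $t=z_J+r$ of $\prod_{a}\Ga(z_a-t)$, which are simple since $(p,\zz)\in L$, note that $Q(\Tdd;{\bs\Zdd})$ is $1$-periodic in $t$ and hence contributes the constant $Q(\Zdd_J,{\bs\Zdd})$ at every pole on the ray from $z_J$, and control the deformation by Stirling) is the standard one and is correct, recovering $\Psi_Q$ as defined by \Ref{Pshh} and \Ref{PPI}. The only points deserving the explicit care you already flag are that the intermediate parabolas $C(A+R)$ must be chosen to stay a fixed distance from the lattice of poles so that the reflection-formula bound on $\Ga(z_a-t)$ is uniform, and the orientation bookkeeping (the region between the two parabolas has $C(A)$ as its left boundary, traversed with the region on its right) which produces exactly the minus sign in front of the residue sum in \Ref{Pshh}.
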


In particular, we have
\beq
\label{Pm}
\Psi^m(p,\yy,\zz) \,=\,\frac1{2\pii}\,
\int_{C(\zz)} e^{2\pii\,mt} e^{-\pii\,nt}p^t\,
\prod_{a=1}^n\,\Ga(z_a\<-t)\,\prod_{j=1}^{n-1}\,(y_j\<-t) \,dt\,.
\eeq

\section{Asymptotics as $p\to\8$}
\label{sec Ainf}

\subsection{Asymptotics of $\Psi^m$}
We make the change of variables:
\beq
\label{chv}
p=s^n,\qquad s=re^{-2\pii\,\phi}, \quad -s=re^{-\pii\>-\>2\pii\,\phi},
\quad r\geq 0\,,\ \;\phi\in\R\,.\kern-2em
\eeq
Denote \,$\om = e^{2\pii/n}$\,.

\begin{lem}
\label{lem Min}
For \,$m\in\Z$\,, \,$\phi\in\R$\,, \,and
\beq
\label{Min}
\frac mn - 1\,<\,\phi\,<\,\frac mn\;,
\vvgood
\eeq
we have the asymptotic expansion as \,$r\to\8$\,,
\beq
\label{exp}
\Psi^m(s^n\!,\yy,\zz)\,=\,
\frac{(2\pi)^{(n-1)/2}}{\sqrt n}\;e^{n\>\om^m\<s}\>
(-\om^m\<s)^{\sum_{a=1}^n z_a +(1-n)/2}\,
\prod_{j=1}^{n-1}\,(y_j\<-\om^m\<s)\,\bigl(1+\Oc(1/s)\bigr)\>.\kern-1em
\eeq
where \,$\arg(-\om^m\<s)=2\pi\>m/n-\pi-2\pi\>\phi$\,, \,so that
\,$|\arg(-\om^m\<s)|<\pi$\,.
\end{lem}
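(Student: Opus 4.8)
The plan is to derive the asymptotics directly from the integral representation \Ref{Pm} by a saddle-point (steepest descent) analysis of the integrand
\[
e^{2\pii\,mt}\,e^{-\pii\,nt}\,p^t\,\prod_{a=1}^n\Gamma(z_a-t)\,\prod_{j=1}^{n-1}(y_j-t),
\]
after substituting $p=s^n$. The first step is to rewrite the $t$-dependent exponential factors together with the product of gamma functions as $\exp\bigl(S(t)\bigr)$ times slowly varying terms, using Stirling's formula for $\prod_{a=1}^n\Gamma(z_a-t)$ as $\lvert t\rvert\to\8$ in the relevant half-plane: up to lower-order corrections, $\prod_a\Gamma(z_a-t)\sim (2\pi)^{n/2}(-t)^{\sum_a z_a-(n/2)+(\text{shift})}\,e^{-n(-t)}\,e^{n(-t)\log(-t)/1}$, i.e. the dominant factor is $(-t)^{-nt}e^{nt}$ up to explicit powers. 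Combined with $e^{2\pii\,mt}e^{-\pii\,nt}p^t=e^{2\pii\,mt}e^{-\pii\,nt}e^{nt\log s}$, the exponent becomes $S(t)=-nt\log(-t)+nt+nt\log s+2\pii\,mt-\pii\,nt$, so $S'(t)=-n\log(-t)+n\log s+2\pii\,m-\pii\,n$, and the saddle point is at $-t=s\,e^{2\pii\,m/n}e^{-\pii}=-\om^m s$, that is $t=\om^m s$. Evaluating $S$ at the saddle gives the factor $e^{n\om^m s}$ and $S''=-n/t$ gives the Gaussian prefactor $\sqrt{2\pi/\lvert S''\rvert}=\sqrt{2\pi\om^m s/n}$, which together with the leftover $(2\pi)^{n/2}$ and the explicit powers assembles into $\tfrac{(2\pi)^{(n-1)/2}}{\sqrt n}$ times $(-\om^m s)^{\sum_a z_a+(1-n)/2}$; the slowly varying factor $\prod_j(y_j-t)$ evaluated at $t=\om^m s$ produces $\prod_j(y_j-\om^m s)$.

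The second step is to justify that the contour $C(\zz)$, the parabola of \Ref{CA}, can be deformed to pass through the saddle $t=\om^m s$ along the path of steepest descent, and that the remaining contribution is $\Oc(1/s)$ relative to the main term. Here the inequality \Ref{Min}, $\tfrac mn-1<\phi<\tfrac mn$, is exactly what is needed: with $s=re^{-2\pii\phi}$ one has $\arg(\om^m s)=2\pi m/n-2\pi\phi\in(0,2\pi)$ shifted appropriately, equivalently $\arg(-\om^m s)=2\pi m/n-\pi-2\pi\phi\in(-\pi,\pi)$, which (a) makes the stated branch of $(-\om^m s)^{\,\cdot}$ unambiguous and (b) guarantees that the saddle lies to the right of all the poles $t=z_a+r$, $r\in\Z_{\ge0}$, of the integrand so that the parabola can be pushed out to it without crossing poles, and that $\Re S(t)\to-\8$ along the tails of the deformed contour. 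One checks that the descent directions at the saddle are transverse to the real axis (consistent with the parabola's shape near its vertex), and a standard Watson's-lemma/Laplace argument on the localized integral yields the full asymptotic expansion with controlled remainder; uniformity in $\phi$ on compact subintervals of \Ref{Min} follows because the saddle depends continuously on $\phi$ and stays bounded away from the poles.

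The main obstacle I expect is the bookkeeping in the contour deformation and the uniform error estimate: one must verify carefully that \emph{no} poles $z_a+r$ are crossed as the parabola $C(\zz)$ is deformed onto the steepest-descent contour through $\om^m s$ for all $\phi$ in the open interval \Ref{Min}, and that the contribution of the connecting arcs at infinity vanishes. This requires a precise description of the region swept out by the deformation in terms of $\arg s$ and the growth of $\lvert e^{S(t)}\rvert$, and is the place where the hypothesis \Ref{Min} must be used in full strength; the endpoints $\phi=m/n-1$ and $\phi=m/n$ are precisely where the saddle collides with the asymptotic direction of the poles, so the expansion degenerates there and Stokes phenomena occur, which is consistent with the exclusion of the endpoints. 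The algebraic simplification collecting the Stirling constants, the Gaussian factor, and the explicit powers into the clean form \Ref{exp} is routine once the saddle and $S''$ are identified, so I would present it compactly and spend the detailed effort on the deformation and remainder bound.
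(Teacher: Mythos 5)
Your proposal is correct and follows essentially the same route as the paper: apply Stirling's formula to the integrand of \Ref{Pm}, locate the saddle at $t=\om^m s$ from the critical-point equation, read off condition \Ref{Min} from $|\arg(-\om^m s)|<\pi$, and assemble the prefactors from $S(t_m)$ and $S''(t_m)$ by steepest descent. The only difference is that the paper delegates the contour-deformation and remainder estimates to Guzzetti's Appendix 1, whereas you sketch (and correctly flag as the delicate point) that justification yourself.
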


\begin{proof}
The proof of this lemma is a modification of the proof of \cite[Lemma 5]{Gu}.
Consider the logarithm of the integrand in \Ref{Pm},
\be
\Ups(t,\yy,\zz)\,=\,
\log\<\Bigl(e^{2\pii\,mt} e^{-\pii\,nt}p^t\,
\prod_{a=1}^n\,\Ga(z_a\<-t)\,\prod_{j=1}^{n-1}\,(y_j\<-t)\Bigr)\>,
\ee
and apply to \,$\Ups$ the Stirling formula
\bea
\log\,\Ga(u)\,=\,u\log u\>-\>u\>+\>\frac 12\log(2\pi/u) + \Oc(1/u)\,,
\quad \on{as}\ u\to\8\,, \ \ |\arg u| < \pi\,.
\eea
As \,$t\to\8$ and \,$|\arg (-t)|<\pi$, we have
\vvn.4>
\begin{align*}
\Ups\,&{}=\,
2\pii\,mt -\pii\,nt + n\>\bigl(\>\log r - 2\pii\,\phi\bigr)\,t
\\
&\>{}+\,\sum_{a=1}^n
\Bigl(\<(z_a - t)\log (z_a\<-t) - (z_a\<-t) +
\frac 12 \log\<\Bigl(\frac {2\pi}{z_a\<-t}\Bigr)\<\Bigr)+
\sum_{j=1}^{n-1}\,\log(y_j\<-t)+ \Oc(1/t)\>.
\end{align*}
The critical point equation of this expression with respect to \,$t$ yields
\vvn.4>
\beq
\label{cr m}
\log(-t)\,=\,2\pii\;\frac mn\,-\,\pii\,+\,\log r\,-\,2\pii\,\phi\,+\,
\Oc(1/t)\,.
\eeq
This implies that for
\vvn-.5>
\beq
\label{ineq}
-\>\pi<\>2\pi\frac mn -\pi -2\pi\phi\><\pi\,,
\eeq
the function \,$\Ups(t,\yy,\zz)$ \,has a critical point \,$t_m\in\C$\,,
with respect to \,$t$\,, such that
\beq
\label{cr tp}
\log(-t_m)\,=\,\log(-\om^m\<s)\>+\>\Oc(1/r)\,,
\eeq
where \,$\arg(-\om^m\<s)=2\pi\>m/n-\pi-2\pi\>\phi$\,, \,so that
\,$|\arg(-\om^m\<s)|<\pi$\,.
Inequalities \Ref{ineq} give us relations between \,$m$ \,and \,$\phi$\,,
which are exactly the inequalities in \Ref{Min}. We also have
\begin{align*}
& \Ups(t_m,\yy,\zz)\,=\,n\>\om^m\<s\>+
\sum_{a=1}^n\>z_a \log(-\om^m\<s)\>+\>
\frac n2\log\Bigl(\frac{2\pi}{-\om^m\<s}\Bigr)+
\>\sum_{j=1}^{n-1}\,\log(y_j\<-\om^m\<s)\>+\,\Oc(1/r)\,,
\\
& \frac{d^2}{dt^2}\>\Ups(t_m,\yy,\zz)\,=\,
\frac{n}{-\om^m\<s}\,+\,\Oc(1/r^2)\,.
\end{align*}
We apply the steepest descent method to the integral in \Ref{Pm} as in
\cite[Appendix 1]{Gu} and obtain
\begin{align*}
\Psi^m(s^n,\yy, \zz)\,&{}=\,
\sqrt{\frac{-\om^m\<s}{2\pi n}}\;e^{n\>\om^m\<s}\>
(-\om^m\<s)^{\sum_{a=1}^n z_a-n/2}\,(2\pi)^{n/2}\,
\prod_{j=1}^{n-1}\,(y_j\<-\om^m\<s)\,\bigl(1+\Oc(1/r)\bigr)
\\
&{}=\,\frac{(2\pi)^{(n-1)/2}}{\sqrt n}\;e^{n\>\om^m\<s}\>
(-\om^m\<s)^{\sum_{a=1}^n z_a +(1-n)/2}\,
\prod_{j=1}^{n-1}\,(y_j\<-\om^m\<s)\,\bigl(1+\Oc(1/s)\bigr)\>,
\end{align*}
which proves the lemma.
\end{proof}

\subsection{Admissible $\phi$ and $m$}
\label{sec adm}

\begin{cor}
\label{cor phm}
If the argument \,$\phi$ of \,$s$ satisfies the inequalities
\beq
\label{ad phm}
\frac{k}{n}\,<\,\phi\,<\,\frac{k+1}n\;, \qquad \on{for\ some}\ \ k\in\Z\,,
\eeq
then there are exactly \,$n$ integers $m$ satisfying \Ref{Min}.
They are \,$m=k+1\lc k+n$. Hence each element of the basis
$\{\Psi^{k+m}(s^n,\yy, \zz)\;|\;m=1\lc n\}$ of the space of solutions of
the joint system of equations \Ref{qde} and \Ref{qKZ} has the asymptotic
expansion \Ref{exp}.
\qed
\end{cor}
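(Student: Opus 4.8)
The plan is to read off the claim directly from Lemma \ref{lem Min}, since Corollary \ref{cor phm} is a purely arithmetic consequence of the inequalities \Ref{Min}. First I would rewrite \Ref{Min} in the equivalent form $n\phi < m < n\phi + n$, which says that $m$ ranges over the integers strictly between $n\phi$ and $n\phi+n$. The length of this open interval is exactly $n$, so it contains at least $n-1$ and at most $n$ integers; the count is exactly $n$ precisely when the endpoints $n\phi$ and $n\phi+n$ are \emph{not} integers, equivalently when $n\phi\notin\Z$.

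Next I would invoke the hypothesis \Ref{ad phm}: the assumption $k/n<\phi<(k+1)/n$ for some $k\in\Z$ is equivalent to $k<n\phi<k+1$, so $n\phi$ lies strictly between two consecutive integers and in particular $n\phi\notin\Z$. Hence the open interval $(n\phi,\,n\phi+n)$ contains exactly $n$ integers, and since $k<n\phi<k+1$ forces $n\phi+n$ to lie strictly between $k+n$ and $k+n+1$, those integers are precisely $m=k+1,k+2,\lc,k+n$. This is exactly the asserted list.

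Finally, for each such $m$ the inequalities \Ref{Min} hold by construction, so Lemma \ref{lem Min} applies and yields the asymptotic expansion \Ref{exp} for $\Psi^m(s^n,\yy,\zz)$. By Theorem \ref{thm onto} the functions $\{\Psi^{k+m}(s^n,\yy,\zz)\mid m=1\lc n\}$ form a basis of the solution space of the joint system \Ref{qde} and \Ref{qKZ}, and we have just shown each of its members admits the expansion \Ref{exp}; this gives the last sentence of the corollary.

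There is really no serious obstacle here: the only point that needs a moment's care is the boundary case, i.e. checking that the strict inequalities in \Ref{ad phm} are exactly what is needed to rule out an integer endpoint and thereby pin the count at $n$ rather than $n-1$. Everything else is a direct substitution into Lemma \ref{lem Min} and an appeal to Theorem \ref{thm onto}, so the proof is a one-line verification once the interval $(n\phi, n\phi+n)$ is written down.
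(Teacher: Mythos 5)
Your proof is correct and is exactly the argument the paper intends: the corollary is stated with an immediate \qed because it is the one-line arithmetic observation that \Ref{Min} reads $n\phi<m<n\phi+n$, and the hypothesis $k<n\phi<k+1$ pins the integer solutions to $m=k+1\lc k+n$, with the basis claim supplied by Theorem \ref{thm onto}. Nothing further is needed.
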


\begin{cor}
\label{cor mres}
If \,$\phi=k/n$ \,for some \,$k\in\Z$\,, then there are exactly
\,$n-1$ integers \,$m$ satisfying \Ref{Min}. They are \,$m=k+1\lc k+n-1$\,.
\qed
\end{cor}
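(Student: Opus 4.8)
The plan is to solve the double inequality \Ref{Min} directly for $m$ after substituting $\phi=k/n$. Writing \Ref{Min} as
\beq
\label{cormres aux}
\frac mn-1\,<\,\frac kn\,<\,\frac mn
\eeq
and multiplying through by $n>0$, the right-hand inequality becomes $k<m$, while the left-hand one becomes $m-n<k$, i.e.\ $m<k+n$. Hence the integers $m$ satisfying \Ref{Min} are precisely those with $k<m<k+n$, namely $m=k+1\lc k+n-1$, of which there are exactly $n-1$. This is a one-line arithmetic verification, so there is no real obstacle; the statement is a purely formal consequence of the normalization \Ref{Min} already established in Lemma \ref{lem Min}.

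The only point worth emphasizing is the contrast with Corollary \ref{cor phm}. When $\phi$ lies in the \emph{open} interval $(k/n,(k+1)/n)$, the value $m=k+n$ also satisfies \Ref{Min}, because in that regime $\frac mn-1=\frac kn<\phi$ holds strictly; but at the endpoint $\phi=k/n$ this left inequality degenerates to the equality $\frac mn-1=\phi$, which violates the strict inequality required in \Ref{Min}. Thus $m=k+n$ drops out of the admissible set, leaving $n-1$ values rather than $n$. Equivalently, as $\phi$ crosses the critical value $k/n$, one of the $n$ asymptotic directions $\om^m s$ in the expansion \Ref{exp} passes across the boundary $|\arg(-\om^m s)|=\pi$, which is exactly why the count drops by one precisely on these rays.

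In the write-up I would simply state the two clearings of denominators in \Ref{cormres aux}, read off $m\in\{k+1\lc k+n-1\}$, and remark that $m=k+n$ fails the (now non-strict) left inequality; this already gives the asserted count $n-1$ and the explicit list of admissible $m$, completing the proof.
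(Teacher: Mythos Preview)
Your argument is correct and is exactly the trivial arithmetic the paper has in mind: the corollary is stated with a bare \qed, so the intended proof is precisely the substitution $\phi=k/n$ into \Ref{Min} and the reading off of $k<m<k+n$. Your additional remark contrasting with Corollary~\ref{cor phm} is accurate but not needed for the proof itself.
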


We say that $\phi\in\R$ is {\it resonant}, if $\phi=k/n$ for some $k\in\Z$.

\begin{cor}
\label{cor mph}
Given \,$m\in \Z$\,, the function \,$\Psi^{m}(s^n,\yy, \zz)$ has the asymptotic
expansion \Ref{exp} if the argument \,$\phi$ of \,$s$ satisfies
the inequalities
\beq
\label{m phe}
\frac mn - 1\,<\,\phi\,<\,\frac mn\;,
\eeq
cf.~\Ref{Min}. Thus, the function \,$\Psi^{m}(s^n,\yy, \zz)$ has the asymptotic
expansion \Ref{exp} on \,$\C$ with the ray \,$\phi= m/n$ deleted and
the argument of \,$s$ fixed by \Ref{m phe}.
\qed

\end{cor}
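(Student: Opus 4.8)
The plan is to obtain the statement directly from Lemma~\ref{lem Min} by reinterpreting its domain of validity. The first sentence of the corollary is literally the conclusion of Lemma~\ref{lem Min}: that lemma already establishes the asymptotic expansion \Ref{exp} for $\Psi^m(s^n,\yy,\zz)$ as $r\to\8$ under exactly the hypothesis $\frac mn-1<\phi<\frac mn$, which is \Ref{m phe}. All the analytic work — locating the critical point $t_m$ of $\Ups$ via \Ref{cr tp}, applying Stirling's formula, and evaluating the steepest descent integral in \Ref{Pm} — has been carried out there, so nothing further is required for this part.

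For the second sentence I would argue purely formally from the change of variables \Ref{chv}. There $p=s^n$ with $s=re^{-2\pii\phi}$, so $s$, and hence the value $\Psi^m(s^n,\yy,\zz)$, depends on $(r,\phi)$ only through $r>0$ and $e^{-2\pii\phi}$. The map $\phi\mapsto e^{-2\pii\phi}$ restricts to a bijection from the open interval $(\frac mn-1,\frac mn)$, which has length $1$, onto the unit circle with the single point $e^{-2\pii\,m/n}$ removed. Hence, as $\phi$ runs over that interval and $r$ over $(0,\8)$, the point $s=re^{-2\pii\phi}$ runs over all of $\C$ with the ray $\{\,re^{-2\pii\,m/n}\mid r\ge 0\,\}$ deleted — this is the ray ``$\phi=m/n$'' in the notation of \Ref{chv} — and on this set the argument of $s$ is unambiguously pinned down by the requirement $\phi\in(\frac mn-1,\frac mn)$. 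Lemma~\ref{lem Min} then applies verbatim and yields \Ref{exp}.

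I do not expect any real obstacle here: the corollary is a repackaging of Lemma~\ref{lem Min} together with the elementary fact that $\phi\mapsto e^{-2\pii\phi}$ is injective on an interval of length $1$. The only point worth a sentence of care is that the endpoints $\phi=\frac mn-1$ and $\phi=\frac mn$ must genuinely be excluded: these correspond to the resonant value $\phi\equiv m/n\pmod 1$ of Corollary~\ref{cor mres}, where the critical point $t_m$ in \Ref{cr tp} has $\arg(-\om^m s)\to\pm\pi$ and thus approaches the positive real $t$-axis, where the poles $t=z_a+r$ of the integrand of \Ref{Pm} accumulate; the steepest descent estimate behind \Ref{exp} degenerates there, which is precisely why the ray $\phi=m/n$ is removed.
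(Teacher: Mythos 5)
Your proposal is correct and matches the paper's treatment: Corollary~\ref{cor mph} is stated with an immediate \qed\ precisely because it is a restatement of Lemma~\ref{lem Min} combined with the observation that $\phi\mapsto s=re^{-2\pii\phi}$ maps the length-one open interval \Ref{m phe} (times $r>0$) bijectively onto $\C$ minus the ray $\phi=m/n$. Your closing remark about why the endpoints (the resonant values) must be excluded is a correct and welcome elaboration, consistent with Corollary~\ref{cor mres}.
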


\subsection{Stokes rays}

The
{\it Stokes rays} in $\C$ with coordinate $s=re^{-2\pii\phi}$ are the rays defined by the equations
\bean
\label{Str}
\phi \ = \ \frac r{2n},
\qquad r\in \Z.
\eean
The rays with $r$ even (resp., odd) will be called {\it even} (resp., {\it odd}).

\vsk.2>

Consider an interval $k/n<\phi <(k+1)/n$
between consecutive even rays.
Then each element of the basis
$\{ \Psi^{k+m}(s^n,\yy, \zz)\, | \, m=1\lc n\}$ has the asymptotic expansion \Ref{exp}
on that interval,
see Corollary \ref{cor phm}.

\vsk.2>

For given $k/n< \phi <(k+1)/n$
and $r\to\infty$, the absolute value of a basis solution
$ \Psi^{k+m}(s^n,\yy, \zz)$ is determined by the real number $\Re (\om^{k+m} s)$\,.
Namely, if $\Re (\om^{k+m_1} s) < \Re (\om^{k+m_2} s)$ for some $1\leq m_1, m_2\leq n$,
then
\be
|\Psi^{k+m_1}(s^n,\yy, \zz)| \ll |\Psi^{k+m_2}(s^n,\yy, \zz)|
\qquad \on{as}\ \ r\to \8\,,
\ee
see formula \Ref{exp}.

\goodbreak
\vsk.2>
The meaning of Stokes rays is explained by the following lemma.

\begin{lem}
\label{lem rays}
A number \,$\phi\in\R$ is of the form \,$\phi = r/2n$ for some \,$r\in\Z$,
if and only if there are \,$m_1, m_2$ such that
\,$\Re (\om^{m_1} s) = \Re(\om^{m_2} s)$ and
$\;m_1\not \equiv m_2\ \,(\on{mod}\;n)$\,.
\qed
\end{lem}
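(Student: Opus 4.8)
The plan is to compute directly when two ``rotated'' real parts can coincide. Write $s = re^{-2\pii\,\phi}$ and $\om = e^{2\pii/n}$, so that $\om^{m_j}s = r\>e^{2\pii(m_j/n - \phi)}$ and hence
\beq
\label{Repart}
\Re(\om^{m_j}s)\,=\,r\,\cos\Bigl(2\pi\Bigl(\frac{m_j}n-\phi\Bigr)\Bigr)\,,\qquad j=1,2.
\vv.2>
\eeq
For $r>0$ the equality $\Re(\om^{m_1}s)=\Re(\om^{m_2}s)$ is therefore equivalent to
$\cos(2\pi(m_1/n-\phi))=\cos(2\pi(m_2/n-\phi))$, which holds if and only if either
$m_1/n-\phi \equiv m_2/n-\phi \pmod 1$ or $m_1/n-\phi \equiv -(m_2/n-\phi)\pmod 1$. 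The first alternative says $m_1\equiv m_2\pmod n$, which is excluded by the hypothesis $m_1\not\equiv m_2\pmod n$ on the ``only if'' side and which we must avoid producing on the ``if'' side.

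First I would prove the ``only if'' direction. Suppose $\phi = r/(2n)$ with $r\in\Z$. I must exhibit $m_1,m_2$ with $m_1\not\equiv m_2\pmod n$ and $\Re(\om^{m_1}s)=\Re(\om^{m_2}s)$, i.e., by the reduction above, with $(m_1+m_2)/n - 2\phi \in\Z$, that is, $m_1+m_2 \equiv 2n\phi = r \pmod n$. Simply pick any $m_1$ and set $m_2 = r - m_1$ shifted by a multiple of $n$ if needed so that $m_1\not\equiv m_2\pmod n$; this last nondegeneracy can always be arranged because changing $m_2$ by $n$ does not affect the congruence $m_1+m_2\equiv r\pmod n$, so among $\{r-m_1, r-m_1+n\}$ at least one is not congruent to $m_1$ modulo $n$ unless $n\mid 2$... here I would just take, say, $m_1 = 0$ and $m_2 = r$, and note $m_2 = r \not\equiv 0 \pmod n$ precisely when $r\not\equiv 0\pmod n$; the remaining (resonant-looking) case $r\equiv 0\pmod n$ needs a separate choice such as $m_1=1$, $m_2 = r-1$. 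I expect a small case check on $n$ here, but it is routine.

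For the ``if'' direction, suppose there exist $m_1,m_2$ with $m_1\not\equiv m_2\pmod n$ and $\Re(\om^{m_1}s)=\Re(\om^{m_2}s)$. By \eqref{Repart} and the cosine identity, since $m_1\not\equiv m_2\pmod n$ rules out the first alternative, we must be in the second: $(m_1-\phi n) + (m_2-\phi n) \equiv 0 \pmod n$, i.e. $m_1 + m_2 - 2n\phi \in n\Z$, hence $2n\phi = m_1+m_2 - kn$ for some $k\in\Z$. Therefore $\phi = (m_1+m_2-kn)/(2n) = r/(2n)$ with $r = m_1+m_2-kn\in\Z$, which is the desired form. The main (and only) obstacle is the bookkeeping in the ``only if'' direction to guarantee the extra condition $m_1\not\equiv m_2\pmod n$ can be met simultaneously with $m_1+m_2\equiv r\pmod n$; this is where I would be slightly careful, but it reduces to the elementary observation that shifting one index by $n$ preserves the sum mod $n$ while changing the index mod $n$, so a valid pair always exists (for all $n\ge 2$). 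Everything else is the cosine computation above.
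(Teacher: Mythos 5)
Your argument is the standard one the paper leaves implicit (the lemma is stated with no proof), and the core computation is right: for $r>0$, equality of the real parts reduces via $\cos A=\cos B$ to either $m_1\equiv m_2\pmod n$ or $m_1+m_2-2n\phi\in n\>\Z$, and the second alternative is exactly $\phi\in\frac1{2n}\Z$. Two small points. First, your remark that replacing $m_2$ by $m_2+n$ ``changes the index mod $n$'' is vacuous --- it changes nothing mod $n$ --- but your explicit choices $(m_1,m_2)=(0,r)$ when $r\not\equiv0\pmod n$ and $(m_1,m_2)=(1,r-1)$ when $r\equiv0\pmod n$ do settle the converse for every $n\ge3$, since then $m_1-m_2\equiv 2\not\equiv0\pmod n$. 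Second, your parenthetical ``for all $n\ge2$'' is not quite right: for $n=2$ and $r$ even the constraints $m_1+m_2\equiv r\pmod 2$ and $m_1\not\equiv m_2\pmod 2$ are incompatible, so no admissible pair exists and the ``only if'' direction fails; this is a defect of the lemma as literally stated in the edge case $n=2$ rather than of your method, but you should not claim to have verified it there.
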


\subsection{Definition of Stokes bases}

\begin{defn}
Let $\{I_1(s^n,\yy,\zz), \dots,I_{n}(s^n,\yy,\zz)\}$ be a basis of solutions of the joint system of equations
\Ref{qde} and \Ref{qKZ}. We say that the basis is a {\it Stokes basis} on an interval $(a,b)$
\vvgood
if the basis can be reordered
so that for every $m=1\lc n$ and every non-resonant $\psi\in(a,b)$, we have
\beq
\label{st b}
I_m(s^n,\yy, \zz)\,=\,
\frac{(2\pi)^{(n-1)/2}}{\sqrt n}\;e^{n\>\om^m\<s}\>
(-\om^m\<s)^{\sum_{a=1}^n z_a +(1-n)/2}\,
\prod_{j=1}^{n-1}\,(y_j\<-\om^m\<s)\,\bigl(1+\Oc(1/s)\bigr)\kern-.4em
\eeq
as \,$s\to\8$\,.
\end{defn}

\vsk.3>
For example, for $k\in \Z$, the basis $ \{\Psi^{k+m}(s^n,\yy, \zz)\, | \, m=1\lc n\}$ is a Stokes basis on
the interval $(k/n,(k+1)/n)$.

\vsk.3>

For any ray $\phi = a$, which is not a Stokes ray, we will construct a Stokes basis on the interval
$(a-1/2-\epe, a +\epe)$,
where $\epe$ is a small positive number. We will
formulate the result in terms of a suitable braid group action.

\section{Exceptional bases and braid group}
\label{sec ebbg}

\subsection{Braid group action}

Let $M_n$ be a free $\C[\bs Z^{\pm1}]$-module
with basis $\{e_1\lc e_n\}$.
Define a sesquilinear form $A$ on $M_n$ by the formulas:
\bea
A(e_i,e_i) =1, \quad
A(e_i,e_j)=0 \ \on{for}\ i>j, \quad
A(e_i,e_j) = m_{j-i}(\ZZ)\ \on {for }\ i<j,
\\
A(a(\bs Z)x, b(\bs Z)y)\ =\ a(\bs Z^{-1})b(\bs Z)\,A(x,y) \quad \on{for} a,b \in \C[\bs Z^{\pm1}], \ x,y\in M_n.
\eea
Here the elements $m_k(\ZZ)\in\C[\ZZ^{\pm1}]$ for $k\in\Z_{\geq 0}$ are defined in \Ref{sm}.
Cf. Section \ref{sec:equK}.

\vsk.2>
The matrix of $A$ in the basis $\{e_1\lc e_n\}$
will be called {\it canonical}.

\vsk.2>

A basis $\{v_1\lc v_n\}$ of $M_n$ will be called {\it exceptional} if
\bea
A(v_i,v_i) =1, \quad
A(v_i,v_j)=0 \ \on{for}\ i>j.
\eea
In particular the basis $\{e_1\lc e_n\}$ is exceptional.

\vsk.2>

Let $\mc B_n$ be the braid group on $n$ strands with standard generators $\tau_1\lc\tau_{n-1}$.
The element
\bean
\label{Co}
C=\tau_1\tau_2\dots\tau_{n-1} \in \mc B_n
\eean
is called the {\it Coxeter element}.

\begin{lem}

The braid group acts on the set of exceptional bases
by the formula,
\be
Q\,=\,\{v_1\lc v_n\}\ \mapsto \
\tau_i Q\,=\,\{\dots, v_{i-1}, v_{i+1} -A(v_i,v_{i+1})v_i, v_i,v_{i+2},\dots\}.
\ee
\end{lem}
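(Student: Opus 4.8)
The plan is to establish the three ingredients that make the displayed formula $Q\mapsto\tau_iQ$ into an action of $\mc B_n$ on the set of exceptional bases of $M_n$: (i) $\tau_iQ$ is again an exceptional basis; (ii) each $\tau_i$ is a bijection of this set; and (iii) the braid relations $\tau_i\tau_j=\tau_j\tau_i$ for $|i-j|\ge2$ and $\tau_i\tau_{i+1}\tau_i=\tau_{i+1}\tau_i\tau_{i+1}$ hold. For (i), I would write $Q=\{v_1\lc v_n\}$ and set $a=v_i$, $b=v_{i+1}$, $\al=A(a,b)\in\C[\bs Z^{\pm1}]$, so that $\tau_iQ$ carries $b-\al a$ in position $i$, $a$ in position $i+1$, and $v_j$ in position $j$ for $j\notin\{i,i+1\}$. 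This is a basis of $M_n$ because the transition matrix from $\{v_1\lc v_n\}$ is the identity outside rows and columns $i,i+1$, where it has determinant $-1$ and hence is invertible over $\C[\bs Z^{\pm1}]$. The exceptionality conditions are unaffected for indices avoiding $\{i,i+1\}$; the remaining finitely many cases reduce, by sesquilinearity of $A$, to $A(b-\al a,b-\al a)=1$, $A(a,b-\al a)=0$, $A(b-\al a,v_k)=0$ for $k<i$, and $A(v_j,b-\al a)=0$ for $j>i+1$, each of which follows at once from $A(a,a)=A(b,b)=1$, $A(b,a)=0$, $A(a,b)=\al$ and exceptionality of $Q$. (Writing $\bar\al$ for $\al$ with $\bs Z\mapsto\bs Z^{-1}$, the relevant cancellation is $A(b-\al a,b-\al a)=1-\bar\al\al+\bar\al\al=1$.)

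For (ii) I would exhibit the inverse of $\tau_i$ explicitly as the ``left mutation''
\be
\tau_i^{-1}\colon\ \{w_1\lc w_n\}\ \longmapsto\ \{\dots,\,w_{i-1},\ w_{i+1},\ w_i-\overline{A(w_i,w_{i+1})}\,w_{i+1},\ w_{i+2},\,\dots\}\,,
\ee
where $\overline{\cdot}$ denotes the substitution $Z_a\mapsto Z_a^{-1}$. The same computation as in (i) shows $\tau_i^{-1}$ preserves exceptional bases, and a short check gives $A(w_i,w_{i+1})=-\overline{A(v_i,v_{i+1})}$ whenever $\{w_1\lc w_n\}=\tau_iQ$, from which $\tau_i\tau_i^{-1}=\tau_i^{-1}\tau_i=\id$ follows directly; so each $\tau_i$ is a bijection.

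For (iii), the relation $\tau_i\tau_j=\tau_j\tau_i$ with $|i-j|\ge2$ is immediate, since the two mutations modify disjoint pairs of basis vectors and neither changes any basis vector or any value of $A$ used by the other. The relation $\tau_i\tau_{i+1}\tau_i=\tau_{i+1}\tau_i\tau_{i+1}$ involves only positions $i,i+1,i+2$, so I would compute with $a=v_i$, $b=v_{i+1}$, $c=v_{i+2}$ and the abbreviations $\al=A(a,b)$, $\bt=A(b,c)$, $\gm=A(a,c)$. Running both length-three words through the mutation formula, and using $A(b,a)=A(c,a)=A(c,b)=0$ together with the identities $A(b-\al a,\,c-\gm a)=\bt$ and $A(a,\,c-\bt b)=\gm-\bt\al$ that exceptionality forces, one finds that $\tau_i\tau_{i+1}\tau_iQ$ and $\tau_{i+1}\tau_i\tau_{i+1}Q$ both place the triple $\bigl(c-\bt b+(\bt\al-\gm)a,\ b-\al a,\ a\bigr)$ in positions $i,i+1,i+2$ and coincide in all other positions. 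Together with (i) and (ii) this gives the action of $\mc B_n$.

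All of this is elementary. The one point that genuinely requires attention is the sesquilinear convention — a scalar extracted from the \emph{first} argument of $A$ is evaluated at $\bs Z^{-1}$ — which is what makes the cancellations in (i) and (ii) and the two intermediate identities in (iii) come out. I expect the most laborious step to be the three-strand relation in (iii), though once the values $A(b-\al a,c-\gm a)=\bt$ and $A(a,c-\bt b)=\gm-\bt\al$ have been recorded it is only a matter of following each word through three mutations; there is no genuine conceptual obstacle.
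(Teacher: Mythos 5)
Your proposal is correct and matches the paper's approach: the paper simply states that exceptionality of $\tau_iQ$ and the braid relation are "checked by direct calculations," and your write-up carries out exactly those calculations (plus the invertibility of each $\tau_i$, which the paper leaves implicit), with the sesquilinear convention handled correctly throughout.
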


\begin{proof}
The fact that the basis \,$\tau_i Q$ is exceptional, if \,$Q$ is exceptional,
and the equality \,$\tau_{i}\tau_{i+1}\tau_{i}Q=\tau_{i}\tau_{i+1}\tau_{i}Q$
\,are checked by direct calculations.
\end{proof}

\begin{lem}
\label{lem CQ}
Let $Q=\{v_1\lc v_n\}$ be an exceptional basis
in which the matrix of $A$ is canonical.
Then
\bean
\label{Co el}
C Q=\{
v_n-s_1(\bs Z)v_{n-1}+ \dots +(-1)^ns_{n}(\bs Z)v_1, v_1,v_{2}\lc v_{n-1}\}.
\eean
Moreover, if we multiply the first element of the basis $CQ$ by
$(-1)^{n+1}s_n(\bs Z^{-1})$, then the basis will remain exceptional and
the matrix of $A$ in this new basis
\bean
\label{Co ba}
\{(-1)^{n+1}s_n(\bs Z^{-1})(v_n-s_1(\bs Z)v_{n-1}+ \dots +s_{n}(\bs Z)v_1), v_1,v_{2}\lc v_{n-1}\}
\eean
is canonical.

\end{lem}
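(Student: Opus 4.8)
The plan is to carry out the braid-group computation inside the $K$-theory model, where Lemma \ref{lem Kfo} makes the bookkeeping transparent. First I would note that, since the matrix of $A$ in $\{v_1\lc v_n\}$ is canonical, by Lemma \ref{lem Kfo} the $\C[\bs Z^{\pm1}]$-linear map $v_i\mapsto X^{i-1}$, $i=1\lc n$, is an isomorphism of $M_n$ onto $K_{T^n}(P^{n-1},\C)$ carrying the form $A$ to the form $A$ (both forms have the canonical matrix in the respective bases $\{v_i\}$ and $\{X^{i-1}\}$). This identification commutes with the braid group action and with multiplication by Laurent polynomials in $\bs Z$, since those operations only refer to values of $A$ and to $\C[\bs Z^{\pm1}]$-combinations of basis vectors; hence it is enough to prove the lemma when $v_i=X^{i-1}$.

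Next I would apply the generators in the order prescribed by $C=\tau_1\tau_2\dots\tau_{n-1}$, i.e. $\tau_{n-1}$ first, then $\tau_{n-2}$, down to $\tau_1$, and track the basis. After the first $n-k$ of these steps the basis has the form $\{v_1\lc v_{k-1},\,w^{(k)},\,v_k\lc v_{n-1}\}$, where $w^{(n)}=v_n$ and $w^{(k)}=w^{(k+1)}-A(v_k,w^{(k+1)})\,v_k$; so $CQ=\{w,v_1\lc v_{n-1}\}$ with $w:=w^{(1)}$. Two features of $w$ are then immediate. Only generators $\tau_k$ with $k\le n-1$ occur, so the coefficient of $v_n$ in $w$ stays equal to $1$; thus $w=X^{n-1}$ plus a $\C[\bs Z^{\pm1}]$-combination of $1,X\lc X^{n-2}$. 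And the Gram--Schmidt bookkeeping gives $A(v_k,w)=0$ for $k=1\lc n-1$: indeed $A(v_k,w^{(k)})=0$ by the recursion and $A(v_k,v_k)=1$, while the further corrections turning $w^{(k)}$ into $w$ are multiples of $v_j$ with $j<k$, for which $A(v_k,v_j)=0$ by canonicity.

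Then I would pin down $w$. By Lemma \ref{lem Kfo}, $X^{-1}$ likewise satisfies $A(X^{i-1},X^{-1})=0$ for $i=1\lc n-1$ (since $-1<i-1<n-1$). The canonical form being nondegenerate, the elements of $K_{T^n}(P^{n-1},\C)$ that pair to zero against each of $1,X\lc X^{n-2}$ in the first slot form a free $\C[\bs Z^{\pm1}]$-module of rank one, so $w$ is a scalar multiple of $X^{-1}$. Multiplying the relation $\prod_{a=1}^n(X-Z_a)=0$ from \Ref{HrelK} by $X^{-1}$ gives $X^{-1}=(-1)^{n+1}s_n(\bs Z)^{-1}\bigl(X^{n-1}-s_1(\bs Z)X^{n-2}+\dots+(-1)^{n-1}s_{n-1}(\bs Z)\bigr)$, with leading coefficient $(-1)^{n+1}s_n(\bs Z)^{-1}$; comparing with the normalization of $w$ above gives $w=(-1)^{n+1}s_n(\bs Z)\,X^{-1}=v_n-s_1(\bs Z)v_{n-1}+s_2(\bs Z)v_{n-2}-\dots$, i.e. the first entry of \Ref{Co el}. (Alternatively, $w^{(k)}=\sum_{l=0}^{n-k}(-1)^l\,s_l(\bs Z)\,v_{n-l}$ follows by downward induction straight from the recursion, using the identity \Ref{ms} between the $m_l$ and the $s_l$.)

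Finally, for the last claim, multiplying the first entry of $CQ$ by $(-1)^{n+1}s_n(\bs Z^{-1})$ converts it into $(-1)^{n+1}s_n(\bs Z^{-1})\cdot(-1)^{n+1}s_n(\bs Z)\,X^{-1}=s_n(\bs Z^{-1})s_n(\bs Z)\,X^{-1}=X^{-1}$, since $s_n(\bs Z^{-1})s_n(\bs Z)=(Z_1\cdots Z_n)^{-1}(Z_1\cdots Z_n)=1$. Hence the basis \Ref{Co ba} equals $\{X^{-1},1,X\lc X^{n-2}\}=\{X^{j-2}\,|\,j=1\lc n\}$, a shift of the monomial basis, and Lemma \ref{lem Kfo} shows at once that its matrix of $A$ is canonical: $A(X^{i-2},X^{j-2})=m_{j-i}(\bs Z)$ for $i\le j$ (with $m_0=1$ on the diagonal) and $A(X^{i-2},X^{j-2})=0$ for $j<i$ (because then $j-2<i-2<j-2+n$); in particular it is exceptional. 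The one genuinely computational point is the second paragraph — being careful about which two entries each $\tau_i$ moves, and the orthogonality bookkeeping for $w$; once that is done, working in the $K$-theory model makes the rest formal, and the factor $(-1)^{n+1}s_n(\bs Z^{-1})$ is exactly what cancels the sign and the Laurent monomial in $w=(-1)^{n+1}s_n(\bs Z)X^{-1}$.
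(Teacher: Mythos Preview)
Your argument is correct. The paper's proof is more bare-handed: it establishes \Ref{Co el} by the same downward induction you mention parenthetically (tracking the partial products $\tau_i\cdots\tau_{n-1}Q$ and using \Ref{ms} at each step), and for the second claim it simply computes the matrix of $A$ in the basis \Ref{Co ba} directly from the definitions, again invoking \Ref{ms}. Your route is different in two ways. First, you transfer to the $K$-theory model at the outset and, for the identification of $w$, use the orthogonality $A(v_k,w)=0$ together with nondegeneracy to pin $w$ down as a multiple of $X^{-1}$; this is a slightly more conceptual alternative to the bare induction (your parenthetical alternative is essentially the paper's argument). Second, and more substantively, for the canonicity of the matrix in \Ref{Co ba} you observe that the rescaling turns the first vector into $X^{-1}$ exactly, so the new basis is the shifted monomial basis $\{X^{-1},1,\dots,X^{n-2}\}$, and Lemma~\ref{lem Kfo} gives the canonical matrix immediately. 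This avoids redoing the $m$--$s$ bookkeeping and makes transparent that the modified Coxeter map is nothing but the shift $X^{i-1}\mapsto X^{i-2}$. One small point to tighten: when you say ``$w$ is a scalar multiple of $X^{-1}$'' you are implicitly using that $X^{-1}$ \emph{generates} the rank-one orthogonal; this holds because its coefficient on $X^{n-1}$ is the unit $(-1)^{n+1}s_n(\bs Z)^{-1}$, but it is worth saying so explicitly.
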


\begin{proof}
By induction we observe that
\bea
\tau_i\tau_{i+1}\dots\tau_{n-1} Q=
(v_1\lc v_{i-1}, v_n-s_1(\bs Z)v_{n-1}+ \dots +s_{n-i}(\bs Z) v_i, v_i,v_{i+1}\lc v_n\}.
\eea
Then we calculate the matrix of $A$ relative to
the basis $C'Q$ from the definitions. In these calculations we use
relations \Ref{ms}.
\end{proof}

The map of bases
\bea
\{v_1\lc v_n\} \mapsto
\{(-1)^{n+1}s_n(\bs Z^{-1})(v_n-s_1(\bs Z)v_{n-1}+ \dots +s_{n}(\bs Z)v_1), v_1,v_{2}\lc v_{n-1}\}
\eea
will be called the {\it modified Coxeter map} and denoted by $C'$.

\subsection{The element $\ga_n\in\mc B_n$}

Let $\ell=n-1$ for $n$ odd and $\ell=n-2$ for $n$ even.
Thus $\ell$ is always even. Set \,$\ga_2=1$\,, and for \,$n\ge 3$\,,
\be
\beta_k = \tau_k \tau_{k+1} \dots \tau_{n-1},
\qquad
\gamma_n = \beta_\ell \beta_{\ell-2} \dots \beta_2.
\ee
For example,
\vvn->
\begin{align*}
\ga_3 & {}=\tau_2\,,
\\
\ga_4 & {}=\tau_2\tau_3\,,
\\
\ga_5 & {}=(\tau_4)\>(\tau_2\tau_3\tau_4)\,,
\\
\ga_6 & {}=(\tau_4\tau_5)\>(\tau_2\tau_3\tau_4\tau_5)\,.
\end{align*}
Define
\vvn-.8>
\begin{alignat*}3
\dl_{n,\on{odd}} &{}=\tau_1\tau_3\dots\tau_{n-2}\,, &
\dl_{n,\on{even}} &{}=\tau_2\tau_4\dots\tau_{n-1}\,,\qquad &&
\textrm{for \,$n$ \,odd}\,,
\\
\dl_{n,\on{odd}} &{}=\tau_1\tau_3\dots\tau_{n-1}\,,\qquad &
\dl_{n,\on{even}} &{}=\tau_2\tau_4\dots\tau_{n-2}\,, &&
\textrm{for \,$n$ \,even}\,.
\end{alignat*}

\begin{lem}
\label{lem ga}
We have the following identity in \,$\mc B_n$\,:
\beq
\label{od ev}
\dl_{n,\on{even}} \,\dl_{n,\on{odd}} \,\ga_n\,=\,\ga_n\, C\,.
\eeq

\end{lem}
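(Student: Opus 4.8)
The plan is to verify the braid-group identity \Ref{od ev} by a direct, but carefully organized, computation in $\mc B_n$, treating the cases $n$ odd and $n$ even in parallel. The left-hand side $\dl_{n,\on{even}}\,\dl_{n,\on{odd}}\,\ga_n$ is a product of generators $\tau_i$ arranged in a specific pattern, and the right-hand side $\ga_n\,C = \ga_n\,\tau_1\tau_2\cdots\tau_{n-1}$ is another such product; both have the same number of factors (one checks the letter counts agree: $\ga_n$ has $\ell/2$ blocks $\bt_k$ of lengths $n-\ell,\,n-\ell+2,\dots,n-2$ summing to a fixed number, and on each side we append either $\dl_{n,\on{even}}\dl_{n,\on{odd}}$ or $C$, which again have equal length $n-1$). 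So \Ref{od ev} is an equality of positive braid words of equal length, and the natural tool is repeated use of the far-commutation relation $\tau_i\tau_j=\tau_j\tau_i$ for $|i-j|\ge 2$ together with the braid relation $\tau_i\tau_{i+1}\tau_i=\tau_{i+1}\tau_i\tau_{i+1}$.

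First I would set up an induction on $n$, decreasing by $2$ each time, anchored at the base cases $n=2$ (trivial, $\ga_2=1$, $C=\tau_1$, and $\dl_{2,\on{even}}=1$, $\dl_{2,\on{odd}}=\tau_1$) and $n=3$ ($\ga_3=\tau_2$, $C=\tau_1\tau_2$, $\dl_{3,\on{even}}=\tau_2$, $\dl_{3,\on{odd}}=\tau_1$, so one checks $\tau_2\tau_1\tau_2=\tau_2\tau_1\tau_2$ after applying the braid relation $\tau_2\tau_1\tau_2=\tau_1\tau_2\tau_1$ — actually one needs $\tau_2\cdot\tau_1\cdot\tau_2=\tau_2\cdot C$, i.e.\ $\tau_2\tau_1\tau_2=\tau_2\tau_1\tau_2$, which is immediate). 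The key structural observation driving the induction is that $\ga_n=\bt_\ell\,\ga_{n-2}'$ where $\ga_{n-2}'$ is the copy of $\ga_{n-2}$ living on strands $3,\dots,n$ shifted up by $2$ — more precisely, $\bt_{\ell-2}\bt_{\ell-4}\cdots\bt_2$ in $\mc B_n$ coincides with the image of $\ga_{n-2}\in\mc B_{n-2}$ under the shift $\tau_i\mapsto\tau_{i+2}$ composed with including strands. One then rewrites the outermost block $\bt_\ell=\tau_\ell\tau_{\ell+1}\cdots\tau_{n-1}$ and pushes it through using far-commutation past the lower-index generators in $\ga_{n-2}'$ and in the $\dl$'s.

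The heart of the argument is then a "commutation-and-braiding" lemma: moving the long cycle $C=\tau_1\tau_2\cdots\tau_{n-1}$ past $\ga_n$ produces exactly the interleaved product $\dl_{n,\on{even}}\dl_{n,\on{odd}}$. I would prove this by explicitly tracking how $\tau_1$, then $\tau_2$, etc., migrate leftward through $\ga_n$. Because $\ga_n$ is built from blocks $\bt_k$ with $k$ even, the generator $\tau_j$ from $C$ commutes with all of $\bt_k$ for $k\ge j+2$, and interacts via a single braid relation with $\bt_{j-1}$ or $\bt_j$; the parity bookkeeping (which $k$'s are even) is precisely what separates the output into the odd-indexed generators $\tau_1,\tau_3,\dots$ and the even-indexed generators $\tau_2,\tau_4,\dots$. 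Carrying this out for one value of $j$ at a time and collecting the surviving generators yields $\dl_{n,\on{odd}}$ and $\dl_{n,\on{even}}$ in the right order.

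\textbf{Main obstacle.} The computation is entirely mechanical once the inductive skeleton is fixed, so there is no deep obstruction; the real difficulty is purely organizational — keeping the index ranges straight across the odd/even dichotomy for $n$ (which shifts the definitions of $\ell$, of $\ga_n$'s first block, and of both $\dl$'s), and making the "push $\tau_j$ through $\bt_k$" step precise enough that the braid relation is applied in exactly the right spot. I expect the cleanest writeup uses the shift homomorphism $\mc B_{n-2}\hookrightarrow\mc B_n$ to reduce the bulk of \Ref{od ev} to the inductive hypothesis, leaving only a short fixed-size "boundary" computation involving $\tau_1,\tau_2,\tau_3$ and the outermost block $\bt_\ell$, which can be checked by hand. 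The authors' phrase that such identities "are checked by direct calculations" suggests they have in mind essentially this: an induction whose inductive step is a finite rewriting using only the two braid-group relations.
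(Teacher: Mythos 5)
The paper offers nothing beyond ``the proof is straightforward,'' so a direct verification of the kind you propose is exactly what is intended, and your base cases and general plan (induction on $n$ in steps of $2$ via the shift homomorphism $\sigma:\tau_i\mapsto\tau_{i+2}$) are the right idea. However, your key structural observation is wrong as stated: the subword $\bt_{\ell-2}\bt_{\ell-4}\cdots\bt_2$ of $\ga_n$ is \emph{not} the image of $\ga_{n-2}$ under $\sigma$. Since $\bt_k=\tau_k\cdots\tau_{n-1}$ in $\mc B_n$ while the blocks of $\ga_{n-2}$ end at $\tau_{n-3}$, one has $\sigma(\bt_k')=\bt_{k+2}$, hence $\sigma(\ga_{n-2})=\bt_\ell\bt_{\ell-2}\cdots\bt_4$ and the correct decomposition is $\ga_n=\sigma(\ga_{n-2})\,\bt_2$, not $\ga_n=\bt_\ell\cdot\sigma(\ga_{n-2})$. (Concretely, for $n=6$: $\sigma(\ga_4)=\sigma(\tau_2\tau_3)=\tau_4\tau_5=\bt_4$, whereas $\bt_2=\tau_2\tau_3\tau_4\tau_5$.) Consequently the inductive step you describe --- peel off the outermost block $\bt_\ell$, apply the inductive hypothesis to the rest, and finish with a fixed-size boundary computation in $\tau_1,\tau_2,\tau_3$ --- does not go through as written.

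The repair is routine and keeps your skeleton. Using $\ga_n=\sigma(\ga_{n-2})\bt_2$, $C=\tau_1\bt_2$, $\sigma(C_{n-2})=\bt_3$, $\dl_{n,\on{odd}}=\tau_1\,\sigma(\dl_{n-2,\on{odd}})$ and $\dl_{n,\on{even}}=\tau_2\,\sigma(\dl_{n-2,\on{even}})$ (valid for both parities of $n$), and noting that $\tau_1,\tau_2$ commute with every letter of $\sigma(\ga_{n-2})$ (all indices $\ge4$) while $\tau_1$ commutes with every letter of $\sigma(\dl_{n-2,\on{even}})$, the inductive hypothesis reduces \Ref{od ev} to the single identity
\be
\tau_2\tau_1\,\bt_3\,\bt_2\,=\,\bt_2\,\tau_1\tau_2\,\bt_3\,,
\ee
which holds by far-commutations alone: both sides reduce to $\tau_2\tau_1\tau_3\tau_2(\tau_4\cdots\tau_{n-1})(\tau_3\cdots\tau_{n-1})$ by sliding $\tau_1$ and $\tau_2$ leftward. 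In particular, contrary to what your sketch anticipates, no genuine braid relations $\tau_i\tau_{i+1}\tau_i=\tau_{i+1}\tau_i\tau_{i+1}$ are needed anywhere, only commutations; but the decomposition driving the induction must be corrected before the argument closes.
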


\begin{proof} The proof is straightforward.
\end{proof}

\subsection{Bases $Q'$ and $Q''$}
\label{sec QQ}

Let $n=2k+1$.
Let $Q=\{v_1\lc v_n\}$ be a basis of $M_n$.
For $1\le l\leq m\leq n$ denote
\bean
\label{vml}
v_{m}(l) = v_m-s_1(\ZZ)v_{m-1} +\dots+(-1)^{m-l}s_{m-l}(\ZZ)v_l.
\eean

\vsk.2>

Introduce a basis \,$Q'$ in which the vectors $v_1\lc v_{k+1}$ stay at the positions
1, 3, 5, \dots, $2k+1$, respectively, and the vectors $v_{2k+1}(2)$,
$v_{2k}(3)$, \dots, $v_{k+2}(k+1)$ stay at the positions
2, 4, 6, \dots, $2k$, respectively.

Introduce a basis \,$Q''$ in which the vectors $v_1\lc v_{k+1}$
stay at the positions
2, 4, 6, \dots, $2k$, $2k+1$, respectively, and the vectors $v_{2k+1}(1)$,
$v_{2k}(2)$, \dots, $v_{k+2}(k+1)$ stay at the positions
1, 3, 5, \dots, $2k-1$, respectively.

For example for $n=5$, we have
\begin{align}
\label{Q5}
Q'=\,\{ {}&
v_1, v_5-s_1(\ZZ)v_4+s_2(\ZZ)v_3-s_3(\ZZ)v_2, v_2, v_4-s_1(\ZZ)v_3,v_3\}\,,
\\[4pt]
\notag
Q''=\,\{ {}&
v_5-s_1(\ZZ)v_4+s_2(\ZZ)v_3-s_3(\ZZ)v_2+s_4(\ZZ)v_1,v_1,
\\[1pt]
\notag
& v_4-s_1(\ZZ)v_3 +s_2(\ZZ)v_2,v_2, v_3 \}\,.
\end{align}

Let $n=2k$.
Let $Q=\{v_1\lc v_n\}$ be a basis of $M_n$.
Introduce a basis $Q'$, in which the vectors $v_1\lc v_{k+1}$ stay at the positions
1,3, 5, \dots, $2k-1$, $2k$, respectively, and the vectors $v_{2k}(2)$,
$v_{2k-1}(3)$, \dots, $v_{k+2}(k)$ stay at the positions
2, 4, 6, \dots, $2k-2$, respectively.

Introduce a basis $Q''$, in which the vectors $v_1\lc v_k$ stay at the positions
2, 4, 6, \dots, $2k$, respectively, and the vectors $v_{2k}(1)$,
$v_{2k-1}(2)$, \dots, $v_{k+1}(k)$ stay at the positions
1, 3, 5, \dots, $2k-1$, respectively.

For example for $n=6$, we have
\begin{align}
\label{Q4}
Q'=\,\{ {}&
v_1, v_6-s_1(\ZZ)v_5+s_2(\ZZ)v_4-s_3(\ZZ)v_3+s_4(\ZZ)v_2, v_2,
\\
\notag
& v_5-s_1(\ZZ)v_4+s_2(\ZZ)v_3, v_3, v_4\},
\\[4pt]
\notag
Q''=\,\{ {}&
v_6-s_1(\ZZ)v_5+s_2(\ZZ)v_4-s_3(\ZZ)v_3 + s_4(\ZZ)v_2-s_5(\ZZ)v_1,
\\
\notag
& v_1, v_5-s_1(\ZZ)v_4 +s_2(\ZZ)v_3-s_3(\ZZ)v_2, v_2, v_4-s_1(\ZZ)v_3,v_3\}.
\end{align}

\begin{lem}
\label{lem gabo}
Let \,$n>1$. Let \,$Q=\{v_1\lc v_{n}\}$ be a basis of $M_n$ such that
the matrix of $A$ relative to $Q$ is canonical. Then
\be
\ga_nQ\,=\,Q', \qquad \dl_{n,\on{odd}} Q'=\,Q''.
\ee
\end{lem}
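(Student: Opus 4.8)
The plan is to verify the two asserted identities of bases by tracking, step by step, how the braid generators act on an exceptional basis with canonical Gram matrix, using only the action formula $\tau_i Q = \{\dots, v_{i-1}, v_{i+1}-A(v_i,v_{i+1})v_i, v_i, v_{i+2},\dots\}$ and the value $A(v_i,v_{i+1})=m_1(\ZZ)$ in the canonical matrix. The key computational lemma, already proved in the excerpt, is that for a canonical basis $\tau_i\tau_{i+1}\dots\tau_{n-1}Q = \{v_1,\dots,v_{i-1}, v_n(i), v_i, v_{i+1},\dots,v_{n-1}\}$ where $v_n(i) = v_n - s_1(\ZZ)v_{n-1}+\dots+(-1)^{n-i}s_{n-i}(\ZZ)v_i$; this is exactly the statement that $\beta_i Q$ inserts the "companion combination" $v_n(i)$ into slot $i$ and shifts $v_i,\dots,v_{n-1}$ one step to the right. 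So the first half, $\ga_nQ=Q'$, is obtained by applying $\beta_\ell, \beta_{\ell-2},\dots,\beta_2$ in that order and bookkeeping which vector lands in which slot.

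Concretely, I would argue by (downward) induction on the factors of $\ga_n=\beta_\ell\beta_{\ell-2}\dots\beta_2$. Applying $\beta_\ell$ first (note $\ell=n-1$ for $n$ odd, $\ell=n-2$ for $n$ even) produces $v_n(\ell)$ in slot $\ell$; then applying $\beta_{\ell-2}$ to the result produces $v_{n-1}(\ell-2)$ in slot $\ell-2$ — here one must check that the sub-basis being acted on by $\beta_{\ell-2}$ still has the relevant Gram entries needed for the formula of Lemma~\ref{lem CQ} to apply, i.e. that the $\beta$'s do not interfere because each $\beta_j$ only touches strands $j,\dots,n$ and leaves the first $j-1$ positions untouched. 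Continuing, $\beta_{\ell-2j}$ inserts $v_{n-j}(\ell-2j)$. Matching indices: the odd slots $1,3,5,\dots$ keep $v_1,v_2,v_3,\dots$ and the even slots $2,4,6,\dots$ receive $v_n(2), v_{n-1}(3),\dots$, which is precisely the description of $Q'$ given before the lemma (with the appropriate parity adjustment at the top slot for $n$ even vs.\ odd). I would present this for $n=5$ and $n=6$ explicitly as a sanity check against \Ref{Q5} and \Ref{Q4}, then state that the general pattern follows by the same bookkeeping.

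For the second identity $\dl_{n,\on{odd}}Q'=Q''$, recall $\dl_{n,\on{odd}}=\tau_1\tau_3\tau_5\dots$ is a product of \emph{commuting} generators (indices differ by at least $2$), so the order is irrelevant and each $\tau_{2j-1}$ acts independently on the pair of positions $(2j-1,2j)$. In $Q'$, position $2j-1$ holds some vector $w$ and position $2j$ holds $v_{n+1-j}(l)$ for the appropriate $l$; applying $\tau_{2j-1}$ swaps them up to subtracting $A(w, v_{n+1-j}(l))\,w$ from the vector moved left. The content to check is that $A$ of the left vector against the right vector, computed in the canonical-at-$Q$ coordinates via the formula $A(a(\ZZ)x,b(\ZZ)y)=a(\ZZ^{-1})b(\ZZ)A(x,y)$ and the canonical values $A(v_i,v_j)=m_{j-i}(\ZZ)$ for $i<j$, combined with the Newton-type relations \Ref{ms} among the $s_k$ and $m_k$, yields exactly the coefficient needed to turn $v_{n+1-j}(l)$ into $v_{n+1-j}(l')$ with one more term — i.e.\ into the vectors listed in $Q''$ — while leaving the remaining positions as claimed. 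I expect this Gram-matrix computation, i.e.\ showing $A(w,v_{n+1-j}(l))$ has the precise value that makes the telescoping work, to be the main obstacle: it is where relations \Ref{ms} get used in a nontrivial way, and one must be careful about the sesquilinear conjugation $\ZZ\mapsto\ZZ^{-1}$ on the left argument. Everything else is bookkeeping of indices and positions, which I would organize by the same parity split ($n$ odd with $n=2k+1$, $n$ even with $n=2k$) used in the statement, and double-check against the worked examples \Ref{Q5} and \Ref{Q4}.
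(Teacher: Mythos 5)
Your overall plan is the right one (the paper itself dismisses this lemma with ``the proof is straightforward''), and the second half of your argument --- treating $\dl_{n,\on{odd}}$ as a product of commuting generators acting on disjoint adjacent pairs, with the telescoping coefficient $A\bigl(v_j,v_m(j+1)\bigr)=(-1)^{m-j-1}s_{m-j}(\ZZ)$ coming from \Ref{ms} exactly as in the induction for Lemma~\ref{lem CQ} --- is correct. But there is a concrete error in the first half: you apply the factors of $\ga_n=\beta_\ell\beta_{\ell-2}\cdots\beta_2$ in the wrong order. The convention forced by the paper (it is the one under which the induction in the proof of Lemma~\ref{lem CQ} gives $\tau_i\tau_{i+1}\cdots\tau_{n-1}Q=\{v_1\lc v_{i-1},v_n(i),v_i\lc v_{n-1}\}$, and under which Lemma~\ref{lem ga} is consistent with Lemmas~\ref{lem eS} and~\ref{lem per}) is that the \emph{rightmost} generator acts first, so $\ga_nQ=\beta_\ell\bigl(\cdots(\beta_2Q)\bigr)$ with $\beta_2$ applied first. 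Your step ``applying $\beta_\ell$ first produces $v_n(\ell)$ in slot $\ell$; then $\beta_{\ell-2}$ produces $v_{n-1}(\ell-2)$; \dots\ $\beta_{\ell-2j}$ inserts $v_{n-j}(\ell-2j)$'' would fail: for $n=5$ it yields the vectors $v_5(4)$ and $v_4(2)$, whereas $Q'$ in \Ref{Q5} contains $v_5(2)$ and $v_4(3)$. The sanity check against \Ref{Q5} that you promise would have caught this.

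The fix is not merely cosmetic, because the order also governs the very point you flag as needing care. With $\beta_2$ applied first, the positions $3\lc n$ of $\beta_2Q$ hold the untouched vectors $v_2\lc v_{n-1}$, whose Gram matrix is again canonical (the entries $A(v_i,v_j)=m_{j-i}$ depend only on $j-i$), so the key computational lemma applies verbatim to the next factor $\beta_4$ acting on positions $4\lc n$, which inserts $v_{n-1}(3)$ into slot $4$; iterating, $\beta_{2j}$ inserts $v_{n+1-j}(j+1)$ into slot $2j$, which is exactly the description of $Q'$. In your order, $\beta_{\ell-2}$ would act on a window containing the already modified vector $v_n(\ell)$, the restricted Gram matrix would no longer be canonical, and the formula of Lemma~\ref{lem CQ} could not be invoked. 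Once the order is corrected, the rest of your argument (including the parity split and the $\dl_{n,\on{odd}}$ step) goes through.
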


\begin{proof} The proof is straightforward.
\end{proof}

\subsection{Modules $M_n$, $ K_{T^n}(P^{n-1},\C)$, and $\mc S_n$}

\begin{lem}
\label{lem MK}
The map
$\iota : M_n \to K_{T^n}(P^{n-1},\C)$, defined by
\bean
\label{MK}
\iota\ :\
e_j \mapsto X^{j-1},
\qquad j=1\lc n,
\eean
is an isomorphism of $\C[\ZZ^{\pm1}]$-modules,
which identifies the form $A$
on $M_n$ with the form $A$ on $K_{T^n}(P^{n-1},\C)$.
\qed

\end{lem}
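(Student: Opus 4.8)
The plan is to prove the statement in two stages: first that $\iota$ is a $\C[\ZZ^{\pm1}]$-module isomorphism, and then that it carries the form $A$ on $M_n$ to the form $A$ on $K_{T^n}(P^{n-1},\C)$.

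For the module isomorphism, I would show that $\{1,X,\dots,X^{n-1}\}$ is a $\C[\ZZ^{\pm1}]$-basis of $K_{T^n}(P^{n-1},\C)$. Let $P(X)=\prod_{a=1}^n(X-Z_a)=X^n-s_1(\ZZ)X^{n-1}+\dots+(-1)^ns_n(\ZZ)$ be the defining relation \Ref{HrelK}. Since $P$ is monic of degree $n$ in $X$, the division algorithm shows that $\C[X,\ZZ^{\pm1}]/(P)$ is $\C[\ZZ^{\pm1}]$-free with basis $1,X,\dots,X^{n-1}$. Moreover the constant term of $P$ is $(-1)^nZ_1\cdots Z_n$, a unit in $\C[\ZZ^{\pm1}]$, so in this quotient $X$ is already invertible (one solves $P=0$ for the constant term); hence $\C[X,\ZZ^{\pm1}]/(P)=\C[X^{\pm1},\ZZ^{\pm1}]/(P)=K_{T^n}(P^{n-1},\C)$. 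As $\iota$ sends the free basis $\{e_1\lc e_n\}$ of $M_n$ bijectively to $\{1,X,\dots,X^{n-1}\}$, it is an isomorphism.

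For the forms, I would first note that both forms $A$ are sesquilinear over $\C[\ZZ^{\pm1}]$ with the identical convention $A(a(\ZZ)x,b(\ZZ)y)=a(\ZZ^{-1})b(\ZZ)A(x,y)$: on $M_n$ this is built into the definition, and on $K_{T^n}(P^{n-1},\C)$ it follows from \Ref{sqK} together with $\rho(a(\ZZ)X^i)=a(\ZZ^{-1})X^{-i}$ and the $\C[\ZZ^{\pm1}]$-linearity of $\psi_*$. It therefore suffices to compare values on basis vectors, i.e. to check $A(X^{i-1},X^{j-1})=A(e_i,e_j)$ for $i,j\in\{1\lc n\}$. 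For $i\le j$, Lemma \ref{lem Kfo} gives $A(X^{i-1},X^{j-1})=m_{(j-1)-(i-1)}(\ZZ)=m_{j-i}(\ZZ)$, which is exactly $A(e_i,e_j)$ (and equals $1=m_0(\ZZ)$ when $i=j$). For $i>j$, one has $1\le j<i\le n$, hence $j-1<i-1$ and $i-1\le n-1<(j-1)+n$, so $j-1<i-1<(j-1)+n$ and the vanishing clause of Lemma \ref{lem Kfo} gives $A(X^{i-1},X^{j-1})=0=A(e_i,e_j)$. By sesquilinearity this extends to all of $M_n$.

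I expect no real obstacle; the argument is essentially bookkeeping. The two points to watch are: (i) that the constant term of $P$ is a \emph{unit} in $\C[\ZZ^{\pm1}]$ — this is precisely what makes $\{1,X,\dots,X^{n-1}\}$ a genuine $\C[\ZZ^{\pm1}]$-basis in the Laurent setting, rather than merely a generating set, and what lets the Laurent quotient collapse to the polynomial quotient; and (ii) the elementary index check $j-1<i-1<(j-1)+n$ for $1\le j<i\le n$, which is what makes the range of the vanishing clause of Lemma \ref{lem Kfo} cover all the off-diagonal pairs with $i>j$.
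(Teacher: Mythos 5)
Your proof is correct and is exactly the verification the paper leaves implicit (the lemma is stated with no proof, as immediate from the division algorithm and Lemma \ref{lem Kfo}). Both of the points you flag --- the unit constant term $(-1)^nZ_1\cdots Z_n$ making $\{1,X,\dots,X^{n-1}\}$ a genuine free basis in the Laurent quotient, and the index check $j-1<i-1<(j-1)+n$ covering all lower-triangular entries --- are the right details to pin down.
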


\vsk.2>
Recall the isomorphism $\theta : K_{T^n}(P^{n-1},\C) \to \mc S_n$. The composition
isomorphism
$\theta\circ\iota : M_n\to \mc S_n$ is defined by
\bean
\label{MS}
\theta\circ\iota \ :\ e_m\ \mapsto \ \Psi^m, \qquad m=1\lc n.
\eean

Using the isomorphism $\theta\circ\iota $ we define
exceptional bases of $\mc S_n$ with the action of the braid group $\mc B_n$
on them.

\subsection{Exceptional bases of $\mc S_n$}

\begin{lem}
\label{lem eS}
For every $k\in \Z$, the basis $Q_k = \{\Psi^{k+1}\lc\Psi^{k+n}\}$ of $\mc S_n$
is an exceptional basis,
in which the matrix of $A$ is canonical.
We also have $C' Q_k=Q_{k-1}$.
\qed
\end{lem}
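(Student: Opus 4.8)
The statement has three parts, and the plan is to reduce each one to facts already established. First, the claim that $Q_k=\{\Psi^{k+1}\lc\Psi^{k+n}\}$ is a basis of $\mc S_n$ is exactly Theorem \ref{thm onto}. Second, to see that $Q_k$ is an exceptional basis in which the matrix of $A$ is canonical, I would transport everything through the isomorphism $\theta\circ\iota : M_n\to\mc S_n$ of \Ref{MS}, which sends $e_m\mapsto\Psi^m$. By Lemma \ref{lem MK} the map $\iota$ identifies the form $A$ on $M_n$ with the form $A$ on $K_{T^n}(P^{n-1},\C)$, and by Corollary \ref{cor KS} the map $\theta$ identifies the latter with the form $A$ on $\mc S_n$; hence $\theta\circ\iota$ is an isometry for the sesquilinear forms. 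For $k=0$ the basis $Q_0$ is the image of $\{e_1\lc e_n\}$, whose Gram matrix is canonical by definition, so $Q_0$ is exceptional with canonical matrix. For general $k$ one uses the monodromy/shift: by Lemma \ref{lem Kfo} (equivalently the defining formulas for $A$ on $M_n$) one has $A(X^i,X^j)=m_{j-i}(\ZZ)$ for $i\le j$ and $A(X^i,X^j)=0$ for $j<i<j+n$, and these relations are invariant under the simultaneous shift $i\mapsto i+k$, $j\mapsto j+k$ in the appropriate range; since $\theta\circ\iota$ carries $e_m$ to $\Psi^m$ with the same sesquilinear form, we get $A(\Psi^{k+i},\Psi^{k+j})=m_{j-i}(\ZZ)$ for $1\le i\le j\le n$ and $=0$ for $i>j$, which is precisely the statement that $Q_k$ is exceptional with canonical Gram matrix.

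Third, the relation $C'Q_k=Q_{k-1}$: here the plan is to combine Lemma \ref{lem CQ} with Corollary \ref{cor z-rel}. Applying the Coxeter element $C$ to the exceptional basis $Q_k=\{\Psi^{k+1}\lc\Psi^{k+n}\}$ with canonical matrix, Lemma \ref{lem CQ} gives
\[
CQ_k=\{\Psi^{k+n}-s_1(\bs\Zdd)\Psi^{k+n-1}+\dots+(-1)^n s_n(\bs\Zdd)\Psi^{k+1},\ \Psi^{k+1}\lc\Psi^{k+n-1}\}.
\]
By Corollary \ref{cor z-rel} with this value of $k$ (the relation $\sum_{i=0}^n(-1)^{n-i}s_{n-i}(\bs\Zdd)\Psi^{k+i}=0$), the first vector equals $(-1)^{n+1}s_n(\bs\Zdd)\Psi^{k}$, up to sign bookkeeping. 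The modified Coxeter map $C'$ multiplies that first vector by $(-1)^{n+1}s_n(\bs Z^{-1})$ (acting on solutions via $\bs Z\mapsto\bs\Zdd$, i.e. by $(-1)^{n+1}s_n(\bs\Zdd^{-1})$), turning it into $s_n(\bs\Zdd^{-1})s_n(\bs\Zdd)\Psi^{k}=\Psi^{k}$ since $s_n(\bs\Zdd^{-1})s_n(\bs\Zdd)=\prod_j\Zdd_j^{-1}\prod_j\Zdd_j=1$. Hence $C'Q_k=\{\Psi^{k},\Psi^{k+1}\lc\Psi^{k+n-1}\}=Q_{k-1}$, and by the "moreover" part of Lemma \ref{lem CQ} the Gram matrix in this new basis is again canonical — consistent with what part two already tells us about $Q_{k-1}$.

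I expect the only delicate point to be the sign and the $s_n(\bs Z^{\pm1})$-twist bookkeeping in the last step: one must be careful that $C'$ acts on $\mc S_n$ through the substitution $\bs Z\mapsto\bs\Zdd$ (so the scalar $(-1)^{n+1}s_n(\bs Z^{-1})$ becomes multiplication by $(-1)^{n+1}s_n(\bs\Zdd^{-1})$), and that Corollary \ref{cor z-rel} is being applied with the shift index matching the basis $Q_k$. Everything else is a direct transport of structure along the isometry $\theta\circ\iota$, so the lemma follows essentially formally once these identifications are in place; this is why the paper marks it with \qed.
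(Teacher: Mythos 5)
Your proposal is correct and follows the same route as the paper's (very terse) proof: the canonical Gram matrix of $Q_k$ comes from Lemma \ref{lem Kfo} via the identification $\theta\circ\iota$, and $C'Q_k=Q_{k-1}$ comes from Lemma \ref{lem CQ} combined with relation \Ref{Psi reln}, exactly as you argue. Your extra care with the index shift and the cancellation $(-1)^{n+1}s_n(\bs\Zdd^{-1})\cdot(-1)^{n+1}s_n(\bs\Zdd)=1$ is the right bookkeeping (note the last term of the first vector in \Ref{Co el} should read $(-1)^{n-1}s_{n-1}(\bs Z)v_1$, a typo you implicitly absorb into your "sign bookkeeping").
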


\begin{proof} The first statement follows from Lemma \ref{lem Kfo}.
The second statement follows from Lemma \ref{lem CQ} and formula \Ref{Psi reln}.
\end{proof}

Using the formulas of Section \ref{sec QQ} we assign to every basis $Q_k$
two exceptional bases $Q_k'$ and $Q''_k$.
For example for $n=5$, we define
\begin{align*}
Q_k'=\{ {}&
\Psi^{k+1}, \Psi^{k+5}-s_1(\ZZ)\Psi^{k+4}+s_2(\ZZ)\Psi^{k+3}-s_3(\ZZ)\Psi^{k+2},
\Psi^{k+2},
\\
& \Psi^{k+4}-s_1(\ZZ)\Psi^{k+3},\Psi^{k+3}\},
\\
\notag
Q''_k=\{ {}&
\Psi^{k+5}-s_1(\ZZ)\Psi^{k+4}+s_2(\ZZ)\Psi^{k+3}-s_3(\ZZ)\Psi^{k+2}+s_4(\ZZ)\Psi^{k+1},
\Psi^{k+1},
\\
\notag
&  \Psi^{k+4}-s_1(\ZZ)\Psi^{k+3}
+s_2(\ZZ)\Psi^{k+2},\Psi^{k+2}, \Psi^{k+3} \}.
\end{align*}

\goodbreak
\noindent
cf.~\Ref{Q5}. For any $n$ and $k$ we have
\beq
\label{gaQ}
\ga_nQ_k = Q_k', \qquad \dl_{n,\on{odd}}Q_k'= Q''_k\,,
\eeq
by Lemma \ref{lem gabo}.

\begin{lem}
\label{lem per}
For any \,$n$ and \,$k\in\Z$, multiplying the first basis vector of the basis
\,$\dl_{n,\on{even}}Q''_k$ by \,$(-1)^{n+1}s_n(\bs Z^{-1})$ \,yields
the basis \,$Q_{k-1}'$.
\end{lem}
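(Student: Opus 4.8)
The plan is to derive the identity from the braid relation $\dl_{n,\on{even}}\,\dl_{n,\on{odd}}\,\ga_n=\ga_n\,C$ of Lemma \ref{lem ga}, combined with the elementary observation that the word $\ga_n$ does not involve the generator $\tau_1$, so that the braid action of $\ga_n$ leaves the first slot of a basis untouched. First I would chain the two relations in \Ref{gaQ} with Lemma \ref{lem ga} to obtain
\[
\dl_{n,\on{even}}Q_k''\,=\,\dl_{n,\on{even}}\,\dl_{n,\on{odd}}Q_k'
\,=\,\dl_{n,\on{even}}\,\dl_{n,\on{odd}}\,\ga_n Q_k\,=\,\ga_n\,C\>Q_k .
\]
By Lemma \ref{lem eS} the basis $Q_k=\{\Psi^{k+1}\lc\Psi^{k+n}\}$ is exceptional with canonical matrix of $A$, so Lemma \ref{lem CQ} applies. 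Writing $w$ for the first vector of $C Q_k$, the remaining vectors of $C Q_k$ are $\Psi^{k+1}\lc\Psi^{k+n-1}$, and, again by Lemma \ref{lem eS}, multiplying $w$ by $(-1)^{n+1}s_n(\bs Z^{-1})$ turns $C Q_k$ into $C'Q_k=Q_{k-1}$. In other words, $C Q_k$ and $Q_{k-1}$ agree in all slots except the first, where they carry $w$ and $(-1)^{n+1}s_n(\bs Z^{-1})\>w$ respectively.

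Next I would record the structural fact about $\ga_n=\beta_\ell\beta_{\ell-2}\dots\beta_2$ with $\beta_k=\tau_k\tau_{k+1}\dots\tau_{n-1}$: the smallest-indexed generator occurring is $\tau_2$, so $\ga_n$ is a word in $\tau_2\lc\tau_{n-1}$ and $\tau_1$ never appears. For $i\ge2$ the action of $\tau_i$ fixes the vector in the first slot (the slots $1\lc i-1$ are left unchanged in the defining formula) and produces in slots $2\lc n$ vectors that are built only from the vectors previously sitting in slots $2\lc n$, since the coefficient $A(v_i,v_{i+1})$ involves only $v_i,v_{i+1}$, which lie in slots $\ge2$. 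By induction on the length of the word the same holds for $\ga_n$: acting by $\ga_n$ keeps the first vector of a basis unchanged and transforms the remaining $n-1$ vectors independently of it. Applying this to $C Q_k$ and to $Q_{k-1}$, which differ only by the scalar factor $(-1)^{n+1}s_n(\bs Z^{-1})$ on the first vector, I conclude that $\ga_n\,C Q_k$ and $\ga_n Q_{k-1}$ also differ only by that same factor on the first vector. Since $\ga_n\,C Q_k=\dl_{n,\on{even}}Q_k''$ and $\ga_n Q_{k-1}=Q_{k-1}'$ by \Ref{gaQ}, multiplying the first vector of $\dl_{n,\on{even}}Q_k''$ by $(-1)^{n+1}s_n(\bs Z^{-1})$ yields exactly $Q_{k-1}'$, as claimed.

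The only genuinely delicate step is the combinatorial inspection that $\ga_n$ omits $\tau_1$ and that consequently the braid action decouples the first slot from the rest; everything else is a formal assembly of \Ref{gaQ}, Lemma \ref{lem ga}, Lemma \ref{lem CQ}, and Lemma \ref{lem eS}. One should also note in passing that the step $\dl_{n,\on{even}}\,\dl_{n,\on{odd}}\,\ga_n Q_k=\ga_n\,C Q_k$ is a legitimate use of the group identity of Lemma \ref{lem ga}, valid because the formulas of the preceding lemmas do define an honest action of $\mc B_n$ on the set of exceptional bases.
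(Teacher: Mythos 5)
Your proof is correct and follows the same route the paper intends: its one-line proof cites exactly Lemmas \ref{lem ga} and \ref{lem eS}, and your argument is the full elaboration of that, chaining \Ref{gaQ} with the braid identity to get $\dl_{n,\on{even}}Q_k''=\ga_n\,C\>Q_k$ and then using $C'Q_k=Q_{k-1}$. The one step the paper leaves implicit --- that $\ga_n$ is a word in $\tau_2\lc\tau_{n-1}$, so the braid action decouples the first slot and commutes with rescaling the first basis vector --- you supply correctly.
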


\begin{proof}
The lemma follows from Lemmas \ref{lem ga} and \ref{lem eS}.
\end{proof}

\section{Stokes bases }
\label{sec StB}

\subsection{Main theorem}

\begin{thm}
\label{thm main}
The basis $Q_k'$ is a Stokes basis on the interval
$(a-1/2-\epe, a +\epe)$ if $a\in ((2k+1)/2n, (k+1)/n)$ and $\epe>0$ is small enough.
The basis $Q''_k$ is a Stokes basis on the interval
$(a-1/2-\epe, a +\epe)$ if $a\in (k/n, (2k+1)/2n)$ and $\epe>0$ is small enough.
\end{thm}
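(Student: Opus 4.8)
The plan is to combine the asymptotic information already assembled in Section~\ref{sec Ainf} with the combinatorial bookkeeping of Section~\ref{sec ebbg}. The starting observation is Lemma~\ref{lem eS} (together with the example after the Stokes basis definition): for each $k\in\Z$ the basis $Q_k=\{\Psi^{k+1}\lc\Psi^{k+n}\}$ is a Stokes basis on the interval $(k/n,(k+1)/n)$, since by Corollary~\ref{cor phm} every $\Psi^{k+m}$ with $m=1\lc n$ has the asymptotic expansion \Ref{exp} there, with the labelling that puts $\Psi^{k+m}$ in the slot for $e^{n\>\om^m\<s}$. The goal is to extend such an asymptotic description to the larger interval $(a-1/2-\epe,\,a+\epe)$, one half-period longer, and the key point is that a basis element $\Psi^{m}$ retains its expansion \Ref{exp} only on $\C$ with the ray $\phi=m/n$ deleted (Corollary~\ref{cor mph}); so as $\phi$ decreases past a resonant value $k/n$ one of the $n$ basis solutions loses its expansion, and must be replaced by a suitable linear combination that again has expansion \Ref{exp} below that ray.

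First I would analyze what happens when $\phi$ crosses a single resonant ray $\phi=k/n$. Just above it (on $(k/n,(k+1)/n)$) the Stokes basis is $Q_k$; just below it (on $((k-1)/n,k/n)$) the Stokes basis is $Q_{k-1}=\{\Psi^{k}\lc\Psi^{k+n-1}\}$. The solution $\Psi^{k+n}$, valid above, is no longer valid below (its ray is $\phi=(k+n)/n=(k/n)+1$, but it is $\Psi^{k}$ whose ray $\phi=k/n$ is the one just crossed — so in fact it is the element $\Psi^{k}$ entering from below that is the new one, and relation \Ref{Psi reln} expresses $\Psi^{k}$ as the linear combination $\sum_{i=1}^{n}(-1)^{i-1}s_{i}(\bs\Zdd)\Psi^{k+i}$ of the old basis, up to sign). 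Concretely: the recursion $\Psi^{k}=s_1(\bs\Zdd)\Psi^{k+1}-s_2(\bs\Zdd)\Psi^{k+2}+\dots+(-1)^{n-1}s_n(\bs\Zdd)\Psi^{k+n}$ from Corollary~\ref{cor z-rel} is exactly the Gram--Schmidt-type replacement encoded by the modified Coxeter map $C'$, and Lemma~\ref{lem eS} already records $C'Q_k=Q_{k-1}$. So crossing one resonant ray downward applies $C'$ to the Stokes basis; the content I need is that the new basis $C'Q_k=Q_{k-1}$ indeed has the asymptotic expansion \Ref{exp} with the correct slot assignment on the interval just below the ray — this follows from Corollary~\ref{cor phm} applied with $k$ replaced by $k-1$, once one checks (from \Ref{Psi reln} and Lemma~\ref{lem Min}) that the leading exponential of the linear combination producing $\Psi^{k}$ is $e^{n\>\om^{k}\<s}=e^{n\>\om^{k}\<s}$, i.e.\ that the dominant term in the sum is the one with the largest $\Re(\om^{\bullet}s)$ on that interval.

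Next I would treat the passage from a full resonant crossing to a \emph{half}-period, which is the genuinely new part and where the elements $\ga_n$, $\dl_{n,\on{odd}}$, $\dl_{n,\on{even}}$ enter. On the interval $(a-1/2-\epe,a+\epe)$ there sits exactly one odd Stokes ray (at $\phi=(2k+1)/2n$ in the first case, at a shifted value in the second) in addition to possible even rays; crossing that odd ray is what \Ref{gaQ} and Lemma~\ref{lem per} describe braid-group-theoretically. The strategy is: start from $Q_k$, which is Stokes on $(k/n,(k+1)/n)$; show that applying the generators making up $\ga_n$ corresponds, solution-asymptotically, to crossing the odd ray $\phi=(2k+1)/2n$, so that $\ga_nQ_k=Q_k'$ is Stokes on $((2k+1)/2n,\,(k+1)/n)$ extended down through that odd ray, i.e.\ on some interval $(a-1/2-\epe,a+\epe)$ with $a\in((2k+1)/2n,(k+1)/n)$. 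Each generator $\tau_i$ in $\ga_n$ performs the local replacement $v_{i+1}\mapsto v_{i+1}-A(v_i,v_{i+1})v_i$ and a transposition; the heart of the matter is to match this with the \emph{elementary Stokes factor} one picks up when two of the exponents $\Re(\om^{\bullet}s)$ swap dominance across an odd ray. This is exactly the point where Guzzetti's argument (\cite[Lemma~5, Appendix~1]{Gu}) is used, and it is the step I expect to be the main obstacle: one must check that the specific factorization $C=\dl_{n,\on{even}}\dl_{n,\on{odd}}$ conjugated by $\ga_n$ (Lemma~\ref{lem ga}) reproduces precisely the order in which pairs of rays $\Re(\om^{m_1}s)=\Re(\om^{m_2}s)$ with $m_1\not\equiv m_2\pmod n$ are crossed as $\phi$ decreases by $1/2$, and that the sign/coefficient $A(v_i,v_{i+1})$ is the correct Stokes multiplier, using Lemma~\ref{lem rays} to enumerate the crossings and Lemma~\ref{lem Min} for the explicit exponentials. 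Granting that matching, the first assertion of the theorem is immediate, and the second follows symmetrically: $Q_k''=\dl_{n,\on{odd}}Q_k'$ by \Ref{gaQ}, so $Q_k''$ is $Q_k'$ further continued across the next (even) ray $\phi=k/n$, hence Stokes on $(a-1/2-\epe,a+\epe)$ for $a\in(k/n,(2k+1)/2n)$; and Lemma~\ref{lem per} ($\dl_{n,\on{even}}Q_k''=Q_{k-1}'$ after rescaling the first vector by $(-1)^{n+1}s_n(\bs Z^{-1})$, a $1$-periodic unit that does not affect \Ref{st b}) closes the loop, confirming consistency with the $k\mapsto k-1$ shift.

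In summary, the key steps in order are: (1) record the base case that $Q_k$ is Stokes on $(k/n,(k+1)/n)$ from Corollary~\ref{cor phm}; (2) show a downward crossing of an even resonant ray replaces the Stokes basis by $C'$ of it, using Corollary~\ref{cor z-rel} and Lemma~\ref{lem Min} to verify the new leading term; (3) show a downward crossing of an odd ray is implemented by the appropriate string of braid generators, matching each $\tau_i$ with an elementary Stokes factor à la \cite{Gu}; (4) assemble (2)--(3) via the identity $\dl_{n,\on{even}}\dl_{n,\on{odd}}\ga_n=\ga_nC$ of Lemma~\ref{lem ga} and the transition formulas \Ref{gaQ}, Lemma~\ref{lem per}, to conclude that $\ga_nQ_k=Q_k'$ and $\dl_{n,\on{odd}}Q_k'=Q_k''$ are Stokes on the claimed half-period-longer intervals. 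The main obstacle is step (3): making the dictionary between the combinatorics of the $\tau_i$ and the analytic Stokes phenomenon precise, i.e.\ checking that the coefficient $A(v_i,v_{i+1})$ is exactly the entry of the Stokes matrix attached to the corresponding ray crossing.
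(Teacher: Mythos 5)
Your proposal leaves the central step unproven and, more importantly, rests on a misreading of the geometry of the interval. You assert that on $(a-1/2-\epe,\,a+\epe)$ ``there sits exactly one odd Stokes ray,'' and your whole strategy is built around implementing that single crossing by $\ga_n$. But the Stokes rays are spaced $1/2n$ apart, so an interval of length $1/2+2\epe$ contains about $n$ of them (roughly $n/2$ odd and $n/2$ even); e.g.\ for $k=0$, $a\in(1/2n,1/n)$, the interval meets the rays $1/2n,\,0,\,-1/2n,\,-1/n,\,-3/2n,\dots$ all the way down to $a-1/2$. One must therefore verify the expansion \Ref{st b} for every vector of $Q_k'$ on each of the $\sim n$ subintervals cut out by these rays, and that is exactly what the paper does: it writes each basis vector of $Q_0'$ as the function $\Psi^m(l)$ attached to a path $C^m(l)$ on the $n$-gon $\{\om^1\lc\om^n\}$, uses the relation \Ref{Psi reln} to switch, one vector at a time as $\phi$ decreases past each even ray $-j/n$, to the equal expression $\bar\Psi^m(l)$ attached to the reflected path $\bar C^m(l)$ (so that only admissible $\Psi^j$'s in the sense of Corollary \ref{cor mph} are used), and then checks that each path is admissible, i.e.\ that its head $\om^m$ maximizes $\Re(e^{-2\pii\phi}\om^{\bullet})$ among the vertices of the path, so the leading term of the linear combination is the prescribed one. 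Nothing in your write-up performs this dominance check for the specific combinations $\Psi^{n}(2),\Psi^{n-1}(3),\dots$ making up $Q_k'$.

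You also invert the logical structure of the argument. The dictionary you flag as the ``main obstacle'' in step (3) --- matching each $\tau_i$ with an elementary Stokes factor at a ray crossing --- is not how the paper proceeds and is not needed: the braid-group identities (\Ref{gaQ}, Lemmas \ref{lem ga}, \ref{lem gabo}, \ref{lem per}) are pure algebra identifying the explicitly defined bases $Q_k'$, $Q_k''$ with $\ga_nQ_k$, $\dl_{n,\on{odd}}Q_k'$, etc., and the statement that consecutive Stokes bases are related by $\dl_{n,\on{odd}}$ and $\dl_{n,\on{even}}$ (Corollary \ref{cor tab}) is a \emph{consequence} of the theorem, not its mechanism. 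Deducing the theorem from such a dictionary would require an independent proof that the Stokes matrices factor generator-by-generator in the claimed order, which you do not supply. Your steps (1) and (2) (the base case on $(k/n,(k+1)/n)$ and the role of \Ref{Psi reln} in replacing a non-admissible $\Psi^{k+n}$) are sound and do appear in the paper's argument, but as stated the proposal does not constitute a proof.
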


The smallness of $\epe$ means that
the intervals
$(a, a+\epe)$ and $(a-1/2-\epe, a-1/2)$ do not contain points of the form $r/2n$ where $r\in\Z$.

\begin{cor}
\label{cor tab}
Consider the three consecutive asymptotic bases $Q'_k, Q''_k, Q'_{k-1}$.
Then
$Q''_k = \dl_{n,\on{odd}}Q'_k$, and $ Q'_{k-1}$ is obtained from the basis
$\dl_{n,\on{even}}Q''_k$ by multiplying the first basis vector
of $\dl_{n,\on{even}}Q''_k$ by $(-1)^{n+1}s_n(\bs Z^{-1})$.
\qed

\end{cor}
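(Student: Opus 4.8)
\textbf{Proof plan for Corollary \ref{cor tab}.}
The statement is purely a bookkeeping assertion about three consecutive asymptotic bases $Q'_k$, $Q''_k$, $Q'_{k-1}$, and the plan is to read it off from the braid-group identities already established in Section \ref{sec ebbg}. First I would record the starting point: by Lemma \ref{lem eS} the basis $Q_k=\{\Psi^{k+1}\lc\Psi^{k+n}\}$ is exceptional with canonical Gram matrix for the sesquilinear form $A$ on $\mc S_n$, so all the lemmas of Section \ref{sec QQ} apply to it. Then formula \Ref{gaQ} gives $Q_k'=\ga_nQ_k$ and $Q_k''=\dl_{n,\on{odd}}Q_k'$ directly; the first asserted equality $Q''_k=\dl_{n,\on{odd}}Q'_k$ of the corollary is exactly the second half of \Ref{gaQ}, so nothing further is needed there.

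For the second assertion I would invoke Lemma \ref{lem per}: for any $n$ and any $k\in\Z$, multiplying the first basis vector of $\dl_{n,\on{even}}Q''_k$ by $(-1)^{n+1}s_n(\bs Z^{-1})$ yields $Q'_{k-1}$. This is word-for-word the second claim of the corollary, so the entire statement is the conjunction of the second part of \Ref{gaQ} and Lemma \ref{lem per}. Thus the "proof" consists of citing these two facts; the only thing to verify is that the value of $k$ is handled consistently --- i.e.\ that $Q'_{k-1}$ in Lemma \ref{lem per} is the same basis as the $Q'_{k-1}$ appearing in the corollary and in Theorem \ref{thm main}. Since both are defined by applying the recipe of Section \ref{sec QQ} to the canonical exceptional basis $Q_{k-1}$, and $C'Q_k=Q_{k-1}$ by Lemma \ref{lem eS}, this is automatic.

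If one wants the statement to be self-contained rather than a pure citation, the one genuine computation to reproduce is the chain $\dl_{n,\on{even}}\,\dl_{n,\on{odd}}\,\ga_n = \ga_n C$ of Lemma \ref{lem ga}, combined with the modified Coxeter map description $C'Q_k=Q_{k-1}$ from Lemma \ref{lem CQ}: applying $\ga_n$ to both sides of $C Q_k = (\text{unmodified image giving }Q_{k-1}\text{ after rescaling})$ and using $\ga_nQ_k=Q'_k$, $\ga_n(CQ_k)$ evaluated via the left side $\dl_{n,\on{even}}\dl_{n,\on{odd}}\ga_nQ_k=\dl_{n,\on{even}}\dl_{n,\on{odd}}Q'_k=\dl_{n,\on{even}}Q''_k$. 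The scalar $(-1)^{n+1}s_n(\bs Z^{-1})$ is precisely the rescaling of the first vector that converts the unmodified Coxeter image to the canonical-Gram-matrix basis, as spelled out in Lemma \ref{lem CQ}, equations \Ref{Co el}--\Ref{Co ba}. I do not expect any obstacle here: every ingredient is an already-proved lemma, and the corollary is simply their assembly --- the proof is correctly marked \qed with no argument, but the three-line assembly above can be inserted if a proof is desired.
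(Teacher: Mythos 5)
Your proposal is correct and matches the paper exactly: the corollary is stated with no proof because it is precisely the conjunction of the second half of \Ref{gaQ} and Lemma \ref{lem per}, which is what you cite. Your optional unwinding of Lemma \ref{lem per} via Lemma \ref{lem ga} and Lemma \ref{lem eS} also coincides with the paper's own one-line proof of that lemma.
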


It is enough to prove Theorem \ref{thm main} for $k=0$, since the case of arbitrary $k$ is obtained
from the case of $k=0$ by the change of variables
$m\mapsto k+m$ and $\phi \mapsto \phi + k/n$
in the integral \Ref{Pm}.
Theorem \ref{thm main} for $k=0$ is proved in Section \ref{sec proo}.

\subsection{Paths and functions}
\label{sec path}

For integers $l\leq m$ we define the {\it path} $C^m(l)$ on the regular $n$-gone $\Dl$
with vertices
$\{\om^1, \om^2\lc\om^n\}$ as the path along the boundary of $\Dl$,
which starts at the vertex $\om^l$ and goes to the vertex
$\om^m$ through the vertices $\om^{l+1}, \dots\om^{m-1}$. The vertices $\om^m$ and $ \om^l$
are the {\it head} and {\it tail} of the path. The number $m-l$ is the
{\it length} of the path. The path $C^m(l)$ goes around $\Dl$ counterclockwise.

\vsk.2>
All our paths will be of length less than $n$.

\vsk.2>

Let $l\leq m$ and $m-l<n$. Define the
{\it reflected path} $\bar C^m(l)$ to be the path along the boundary of $\Dl$,
which goes from
the vertex $\om^{l-1}$ to the vertex $\om^{m-n}=\om^m$ through the vertices
$\om^{l-2}, \om^{l-3}$, \dots, $\om^{m-n+1}$.
The reflected path $\bar C^m(l)$ goes around $\Dl$ clockwise.

Both $C^m(l)$ and $\bar C^m(l)$ have the same heads. The sum of lengths of
$C^m(l)$ and $\bar C^m(l)$ equals $n-1$.

\begin{defn}
\label{def 1}
Let $l\leq m$ and $m-l<n$. Assign to the path $C^m(l)$ the function
\beq
\label{Pml2}
\Psi^{m}(l)\,=\,
\Psi^m-s_1(\ZZ)\Psi^{m-1} +\dots+(-1)^{m-l}s_{m-l}(\ZZ)\Psi^l,
\eeq
and to the reflected path $\bar C^m(l)$ the function
\beq
\label{Pmll}
\bar \Psi^{m}(l)\,=\,
(-1)^{n-1} s_{n}(\ZZ)\Psi^{m-n}+(-1)^{n-2}s_{n-1}(\ZZ)\Psi^{m-n+1} +\ldots
+(-1)^{m-l}s_{m-l+1}(\ZZ)\Psi^{l-1}.\kern-1em
\eeq

\end{defn}

Notice that
the functions $\Psi^{m}(l)$ and $\bar \Psi^{m}(l)$ are equal by formula \Ref{Psi reln},
while the summands in
$\Psi^m(l)$ correspond to the vertices of the path $C^m(l)$ and the
summands in $\bar\Psi^m(l)$ correspond to the vertices of the path
$\bar C^m(l)$.

\vsk.2>

Consider the rotated $n$-gone $e^{-2\pii \phi}\Dl$ and rotated
paths $e^{-2\pii \phi}C^m(l)$, $e^{-2\pii \phi}\bar C^m(l)$. We say that the path
$e^{-2\pii \phi}C^m(l)$ is {\it admissible}
if
the number $\Re (e^{-2\pii\phi}\om^m)$ is greater than
the number $\Re (e^{-2\pii\phi}\om^k)$ for any other vertex of the path $C^m(l)$,
and we say that the path
$e^{-2\pii \phi}\bar C^m(l)$ is {\it admissible}
if
the number $\Re (e^{-2\pii\phi}\om^m)$ is greater than
the number $\Re (e^{-2\pii\phi}\om^k)$ for any other vertex of the path $\bar C^m(l)$.

\subsection{Bases $Q_0'$, $Q''_0$}

We have
\begin{align}
\label{Qp0}
Q_0'\,=\,\{ {}&
\Psi^{1}, \Psi^n-s_1(\ZZ)\Psi^{n-1}+\dots+ (-1)^{n-2}s_{n-2}(\ZZ) \Psi^2,
\\
\notag
& \Psi^2, \Psi^{n-1}-s_1(\ZZ)\Psi^{n-2}+\dots+ (-1)^{n-4}s_{n-4}(\ZZ) \Psi^3,
\Psi^3, \dots\}\,,
\\[5pt]
\label{Qpp0}
Q''_0\,=\, \{ {}&
\Psi^n-s_1(\ZZ)\Psi^{n-1}+\dots+ (-1)^{n-1}s_{n-1}(\ZZ) \Psi^1, \Psi^1,
\\
\notag
& \Psi^{n-1}-s_1(\ZZ)\Psi^{n-2}+\dots+ (-1)^{n-2}s_{n-3}(\ZZ) \Psi^2, \Psi^2\lc\}.
\end{align}

\subsection{Proof of Theorem \ref{thm main} for $k=0$}
\label{sec proo}

We will prove the theorem for $Q_0'$. The proof for $Q''_0$ is completely similar.

\vsk.2>
We will prove that the basis $Q_0'$ is a Stokes basis on the interval $(a-1/2-\epe, a+\epe)$,
if $a\in (1/2n, 1/n)$, where $\epe$ is small.
The Stokes rays divide the non-resonant points of the interval
$(a-1/2-\epe, a+\epe)$ into the subintervals $(1/2n, a+\epe)$,
$(0, 1/2n)$, $ (-1/2n,0),\dots$. The first and last of these subintervals are shorter than the intervals
between the Stokes rays, since they have boundary points $a+\epe$, $a-1/2-\epe$ lying in between Stokes rays.

We will prove that $Q_0'$ is a Stokes basis on each of these subintervals.

\vsk.2>
We start with the first two subinterval $(1/2n,a+\epe)$ and $(0, 1/2n)$.
We assume that $\epe$ is small so that $(1/2n,a+\epe)\subset (1/2n,1/n)$.

\vsk.2>
The functions $\Psi^m(s^n,\yy,\zz)$, $m=1\lc n$, appearing in \Ref{Qp0} are all admissible for the interval
$(0,1/n)$ in the sense of Corollary \ref{cor phm}.
For $\phi\in (0,1/n)$ each of these functions has an asymptotic expansion
with the leading term $\exp(n re^{-2\pii\phi}\om^m)$. The magnitude of a function
$\Psi^m(s^n,\yy,\zz)$
is determined by the real part of the
number $e^{-2\pii\phi}\om^m$.

Hence to order the magnitudes of the solutions $\Psi^m(s^n,\yy,\zz)$, $m=1\lc n$, for $\phi\in (0,1/n)$
we need to consider the rotated $n$-gone $e^{-2\pii\phi}\Dl$ and order the real
parts of its vertices.

\vsk.2>

Using notations of Section \ref{sec path} we write
\beq
\label{Op}
Q_0'=\{\Psi^1(1),\Psi^n(2), \Psi^2(2),\Psi^{n-1}(3),\Psi^3(3), \dots\}\,.
\vvgood
\eeq
These functions are
the functions, which were assigned to the sequence of paths
$\{C^1(1)$, $C^n(2)$, $C^2(2)$, $C^{n-1}(3)$, $C^3(3)$, $\dots \}$ in Definition \ref{def 1}.
Each of these paths is admissible with respect to
$e^{-2\pii\phi}\Dl$ for $\phi\in(0,1/n)$. Hence
each linear combination $\Psi^m(l)$ appearing in this sequence has asymptotic expansion with leading term
$\exp(n re^{-2\pii\phi}\om^m)$, coming from the summand
$\Psi^m$ of $\Psi^m(l)$, corresponding to the head of the path $C^m(l)$. Therefore
the basis $Q_0'$ is an asymptotic basis on
the two subintervals $(1/2n,a+\epe)$ and $(0, 1/2n)$.

\vsk.3>

Consider the next two subintervals $(-1/2n,0)$ and $(-1/n, -1/2n)$. On the interval
\\
$(-1/n, 0)$ the admissible functions are $\Psi^0, \dots,\Psi^{n-1}$.
For $\phi\in (-1/n,0)$ each of these functions has an asymptotic expansion
with the leading term $\exp(n re^{-2\pii\phi}\om^m)$.

\vsk.2>

In formula \Ref{Qp0} the function $\Psi^n(2)$ is the only function that
uses the non-admissible function $\Psi^n$. We replace
the presentation of $\Psi^n(2)$ in \Ref{Qp0} by the equal sum
\be
\bar\Psi^n(2) = (-1)^{n-1} s_{n}(\ZZ)\Psi^{0}+(-1)^{n-2}s_{n-1}(\ZZ)\Psi^{1},
\ee
which uses only the admissible functions $\Psi^0, \dots,\Psi^{n-1}$. On the interval
$(-1/n,0)$ we have
\bean
\label{new p}
Q_0'=\{\Psi^1(1), \bar\Psi^n(2),\Psi^2(2), \Psi^{n-1}(3),\Psi^3(3),
\dots\},
\eean
where the dots indicates the same functions as
in \Ref{Qp0}. This new presentation of the basis $Q_0'$ corresponds to the sequence
of paths $\{C^1(1),\bar C^n(2), C^2(2), C^{n-1}(3)$, $C^3(3)$, $\dots \}$.

Each of these paths is admissible with respect to
$e^{-2\pii\phi}\Dl$ for $\phi\in(-1/n,0)$. Hence
each linear combination of the functions $\Psi^0, \dots,\Psi^{n-1}$
appearing as a basis vector in \Ref{new p}
has asymptotic expansion with leading term
$\exp(n re^{-2\pii\phi}\om^m)$, coming from the summand
$\Psi^m$ corresponding to the head of the corresponding path. Therefore
the basis $Q_0'$ is an asymptotic basis on the two subintervals
$(-1/2n,0)$ and $(-1/n, -1/2n)$.

\vsk.2>

On the next two subintervals $(-3/2n,-1/n)$ and $(-2/n, -3/2n)$
the admissible functions are $\Psi^{-1}, \dots,\Psi^{n-2}$.
For $\phi\in (-2/n,-1/n)$ each of these functions has an asymptotic expansion
with the leading term $\exp(n re^{-2\pii\phi}\om^m)$.

In formula \Ref{new p} the function $\Psi^{n-1}(3)$ is the only function that
uses the non-admissible $\Psi^{n-1}$. We replace
the presentation of $\Psi^{n-1}(3)$ in \Ref{Qp0} by the equal sum
\begin{align*}
\bar\Psi^{n-1}(3)\,={} & \,
(-1)^{n-1} s_{n}(\ZZ)\Psi^{-1}+(-1)^{n-2}s_{n-1}(\ZZ)\Psi^{0}
\\[1pt]
&{}+(-1)^{n-3}s_{n-2}(\ZZ)\Psi^{1}+(-1)^{n-4}s_{n-3}(\ZZ)\Psi^{2},
\end{align*}
which uses only the admissible functions $\Psi^{-1}, \dots,\Psi^{n-2}$. On the interval
$(-2/n,-1/n)$ we have
\bean
\label{new pp}
Q_0'=\{\Psi^1(1), \bar\Psi^n(2),\Psi^2(2), \bar \Psi^{n-1}(3),\Psi^3(3),
\dots\},
\eean
where the dots indicates the same functions as
in \Ref{Qp0}. This new presentation of the basis $Q_0'$ corresponds to the sequence
of paths $\{C^1(1),\bar C^n(2), C^2(2), \bar C^{n-1}(3)$, $C^3(3)$, $\dots \}$.

Each of these paths is admissible with respect to
$e^{-2\pii\phi}\Dl$ for $\phi\in(-2/n,-1/n)$. Hence
each linear combination of the functions $\Psi^{-1}, \dots,\Psi^{n-2}$
appearing as a basis vector in \Ref{new pp}
has asymptotic expansion with leading term
$\exp(n re^{-2\pii\phi}\om^m)$, coming from the summand
$\Psi^m$ corresponding to the head of the corresponding path. Therefore
the basis $Q_0'$ is an asymptotic basis on the two subintervals
$(-3/2n,-1/n)$ and $(-2/n, -3/2n)$.

Repeating this procedure we prove Theorem \ref{thm main} for $Q_0'$. See a similar reasoning in
\cite{Gu}.

\end{document}